\documentclass[12pt,a4paper]{article}
\usepackage[utf8]{inputenc}
\usepackage[T1]{fontenc}
\usepackage{amsmath,amssymb,amsthm}
\usepackage{graphicx}
\usepackage{hyperref}
\usepackage{geometry}
\newtheorem{theorem}{Theorem}[section]
\newtheorem{proposition}[theorem]{Proposition}
\newtheorem{lemma}[theorem]{Lemma}
\newtheorem{corollary}[theorem]{Corollary}
\theoremstyle{definition}
\newtheorem{definition}[theorem]{Definition}
\newtheorem{remark}[theorem]{Remark}

\newtheorem{assumption}[theorem]{Assumption}

\begin{document}

\title{Geometry, Dynamics and Topology of Thickness Landscape: A Morse-Theoretic Analysis of the Return-Map in the Class \(\mathcal{O}_{C}\)}
\author{M. Barkatou and M. El Morsalani\\
ISTM Laboratory\\
Chouaib Doukkali University, Morocco\\
\href{mailto:barkatou.m@ucd.ac.ma}{barkatou.m@ucd.ac.ma}\\
\href{mailto:Mohamed.elmorsalani@qwave-consult.eu}{Mohamed.elmorsalani@qwave-consult.eu}}
\date{}
\maketitle

\begin{abstract}
We study the geometric and dynamical structure induced by the return map associated with domains in the class \(\mathcal{O}_{C}\). This map, defined through a geometric round-trip between the convex core and the outer boundary, generates a discrete dynamical system on the boundary \(\partial C\).

Building on previous results establishing global convergence of the return dynamics, we show that equilibria of the return map coincide with the critical points of the thickness function. This identification allows us to apply Morse-theoretic tools to derive global constraints on the dynamics. In particular, we obtain lower bounds on the number of equilibria in terms of the Betti numbers of \(\partial C\), as well as a global balance relation governed by the Euler characteristic.

We further analyze the local behavior of the return map near equilibria. Using the differentiability of the return map inherited from the radial and reciprocal constructions, we derive a first-order expansion in which the linearization is governed by the Hessian of the thickness function and an operator arising from the geometry of the return map. This leads to an operator-valued generalization of the previously observed scalar structure, revealing that the dynamics behaves as an anisotropic gradient-like iteration rather than a purely isotropic descent. Near nondegenerate minima, we prove a quantitative descent estimate and local linear convergence under a spectral condition. Under aligned nonlocal geometry, the sign of the curvature gap between the convex core and the outer boundary determines whether the induced dynamics is contracting, neutral, or expanding in each principal direction.

Finally, we discuss extensions beyond the Morse setting, including the Morse-Bott case, and highlight connections between the geometry of the domain, the topology of \(\partial C\), and the structure of the induced dynamics.
\end{abstract}

\noindent\textbf{Keywords:} return map, Morse theory, gradient-like dynamics, Euler characteristic, non-Lipschitz domains

\noindent\textbf{2020 Mathematics Subject Classification:} Primary 37C25, 37B30; Secondary 53C21, 58E05

\section{Introduction}
\label{sec:intro}

The study of geometric structures associated with non-Lipschitz domains has revealed the presence of intrinsic dynamical mechanisms acting on the boundary of an underlying convex core. In particular, domains belonging to the class \(\mathcal{O}_{C}\), introduced by Barkatou \cite{barkatou2002}, exhibit a canonical geometric interaction between the convex core and the outer boundary, which gives rise to a natural discrete dynamical system.

\paragraph{Geometric framework.}
Let \(C\subset \mathbb{R}^{N}\) be a compact convex set with nonempty interior and boundary \(\partial C\) of class \(C^{2}\). An open set \(\Omega \subset \mathbb{R}^{N}\) containing \(C\) is said to belong to the class \(\mathcal{O}_{C}\) if, for almost every \(x\in \partial \Omega\) where the inward normal \(n(x)\) exists, the half-line
\[
D(x,n(x)) = \{x+tn(x):t\geq 0\}
\]
intersects \(C\) \cite{barkatou2002}.

A fundamental consequence of this condition is that the geometry of \(\Omega\) can be encoded by a scalar function defined on \(\partial C\), namely the thickness function
\[
d:\partial C\to \mathbb{R}_{+},\qquad d(c) = \sup \{r\geq 0:c+r\nu(c)\in \Omega\},
\]
where \(\nu(c)\) denotes the outward unit normal to \(\partial C\) \cite{barkatou_elmorsalani_return}.

The boundary \(\partial \Omega\) is then described through the radial map
\[
\Phi(c) = c+d(c)\nu(c),
\]
which parametrizes \(\partial \Omega\setminus C\) by \(\partial C\). The regularity properties of \(\partial \Omega\) are therefore closely linked to those of the thickness function: this relationship has been investigated in recent work by Barkatou on Hausdorff compactness and regularity of admissible domains \cite{barkatou_preprint}.

\paragraph{Return map and induced dynamics.}
The geometric structure of the class \(\mathcal{O}_C\) naturally gives rise to a transformation of \(\partial C\). Starting from a point \(c\in \partial C\), one first moves along the outward normal direction until reaching \(\partial \Omega\), and then returns to \(\partial C\) along the inward normal to \(\partial \Omega\). This two-step procedure defines the return map
\[
F = \pi \circ \Phi :\partial C\to \partial C,
\]
where \(\pi\) denotes the reciprocal map defined by inward normals \cite{barkatou_elmorsalani_return}.

This construction generates a discrete dynamical system on \(\partial C\) whose evolution is entirely driven by the geometry of the domain.

\paragraph{Previous results.}
In \cite{barkatou_elmorsalani_return}, the return map was introduced and its first-order structure was analyzed, leading to the expansion
\[
F(c) = c - 2d(c)\nabla_{\partial C}d(c) + R(c),
\]
which reveals that the dynamics behaves, to leading order, as a variable-step gradient descent for the thickness function.

In a subsequent work \cite{barkatou_elmorsalani_convergence}, it was shown that the return dynamics is globally convergent: for every initial point \(c\in \partial C\), the sequence \((F^{n}(c))\) converges to a critical point of \(d\). Moreover, the fixed points of \(F\) coincide exactly with the critical points of the thickness function.

These results show that the return map defines a gradient-like dynamical system on \(\partial C\), with the function
\[
V(c) = \tfrac12 d(c)^{2}
\]
playing the role of a Lyapunov function. This places the analysis within the general theory of gradient-like systems in dynamical systems \cite{smale1961,hirsch2004,shub1987,palis1982}.

\paragraph{Objective and contribution of the present work.}
The purpose of the present paper is to investigate the global structure of the thickness landscape using tools from Morse theory \cite{milnor1963}.

Since equilibria of the return map coincide with the critical points of the thickness function, the dynamics is organized by the critical set of \(d\). This observation allows us to transfer classical results from Morse theory to the study of the return dynamics.

Our main objective is to understand how the topology of \(\partial C\) constrains the equilibria and the qualitative structure of the dynamics. We show in particular that:
\begin{itemize}
\item the number of equilibria is bounded from below by the Betti numbers of \(\partial C\);
\item the equilibria satisfy a global balance relation governed by the Euler characteristic;
\item near a nondegenerate minimum, the return map satisfies a quantitative descent estimate and acts as a locally contracting preconditioned iteration;
\item under explicit curvature-gap conditions, the local dynamics undergoes a transition between contracting, neutral, and expanding regimes.
\end{itemize}

\paragraph{Local dynamics and anisotropic structure.}
A further contribution of the present work is the analysis of the local structure of the return map near equilibria. While previous results suggested an isotropic gradient-like behavior, we show that the linearization takes the more general form
\[
DF(c^{*}) = I - A(c^{*})\operatorname{Hess}_{\partial C}(d)(c^{*}),
\]
where the operator \(A(c^{*})\) is determined by the geometry of the return map \cite{barkatou_elmorsalani_return}.

This reveals that the dynamics should be understood as an anisotropic, operator-valued gradient-like system, rather than a purely scalar descent. The operator \(A(c^{*})\) encodes geometric information arising from the round-trip between \(\partial C\) and \(\partial \Omega\).

\paragraph{Geometric-dynamical-topological interplay.}
The results obtained in this paper highlight a fundamental threefold interaction:
\begin{itemize}
\item the geometry of the domain, encoded by the thickness function,
\item the dynamics of the return map,
\item the topology of the boundary \(\partial C\).
\end{itemize}
This interplay provides a unified framework in which geometric constraints translate into dynamical behavior and topological restrictions.

\paragraph{Notation.}
Throughout the paper, \(\nu(c)\) denotes the outward unit normal to \(\partial C\), \(n(x)\) the inward unit normal to \(\partial \Omega\), \(\nabla_{\partial C}\) the tangential gradient on \(\partial C\), and \(\operatorname{Hess}_{\partial C}(d)\) the intrinsic Hessian of the thickness function.

\paragraph{Organization of the paper.}
In Section \ref{sec:geometric} we recall the geometric framework and introduce the thickness function and the return map. Section \ref{sec:critical} establishes the correspondence between equilibria of the return map and critical points of the thickness function. Section \ref{sec:morse} introduces the Morse structure of the thickness landscape. Section \ref{sec:topological} derives topological constraints on the equilibria via Morse inequalities. Section \ref{sec:euler} establishes global balance relations through the Euler characteristic and provides their dynamical interpretation. Section \ref{sec:local} is devoted to the analysis of the local dynamics near equilibrium points. Section \ref{sec:examples} presents illustrative examples in low dimensions and a concrete explicit example. Section \ref{sec:structural} discusses structural properties of the dynamics and possible extensions beyond the Morse setting. Finally, Section \ref{sec:conclusion} concludes the paper.

\section{Geometric setting}
\label{sec:geometric}

In this section we recall the geometric framework introduced in \cite{barkatou_elmorsalani_return,barkatou_elmorsalani_convergence}, which will be used throughout the paper.

\subsection{The domain and its convex core}
\label{subsec:domain}

Let \(\Omega \subset \mathbb{R}^{N}\) be a bounded domain satisfying the \(\mathcal{O}_{C}\) condition introduced in \cite{barkatou2002}. We denote by \(C\) its convex core and by
\[
\partial C,\ \partial \Omega
\]
the boundaries of the convex core and of the domain, respectively.

We assume that \(\partial C\) is a smooth compact manifold of class \(C^{2}\).

\subsection{The thickness function}
\label{subsec:thickness}

The thickness function is defined by
\[
d:\partial C\to \mathbb{R}_{+},
\]
where \(d(c)\) is the distance from \(c\in \partial C\) to the outer boundary \(\partial \Omega\) along the outward normal direction \cite{barkatou_elmorsalani_return}.

We assume that \(d\) is of class \(C^{2}\) on \(\partial C\).

\subsection{The return map}
\label{subsec:return}

Starting from a point \(c\in \partial C\), we follow the outward normal ray until it meets the boundary \(\partial \Omega\). From this point, a second geometric step returns to the convex core boundary.

This construction defines the return map
\[
F:\partial C\to \partial C.
\]
The precise construction and regularity properties of \(F\) are detailed in \cite{barkatou_elmorsalani_return}.

\subsection{Basic properties}
\label{subsec:properties}

The return map satisfies the following key properties \cite{barkatou_elmorsalani_convergence}:
\begin{itemize}
\item (Descent property) The thickness function is nonincreasing along the dynamics:
\[
d(F(c))\leq d(c). \tag{2.3}\label{eq:descent}
\]
\item (Characterization of equilibria) A point \(c^{*}\in \partial C\) is a fixed point of \(F\) if and only if
\[
\nabla_{\partial C}d(c^{*}) = 0. \tag{2.4}\label{eq:fixed_grad}
\]
\item (Global convergence) For every \(c\in \partial C\), the sequence \((F^{n}(c))\) converges to a critical point of \(d\).
\end{itemize}
These results are established in \cite{barkatou_elmorsalani_convergence}.

\section{Critical points and equilibria}
\label{sec:critical}

We recall the fundamental link between the geometry of the thickness function and the dynamics of the return map established in the convergence analysis \cite{barkatou_elmorsalani_convergence}.

\begin{definition}
A point \(c^{*}\in \partial C\) is called a \emph{critical point} of the thickness function if
\[
\nabla_{\partial C}d(c^{*}) = 0. \tag{3.1}\label{eq:crit_def}
\]
We denote by \(\operatorname{Crit}(d)\) the set of such points.
\end{definition}

The following result shows that the equilibria of the return dynamics coincide exactly with the critical points of the thickness function \cite{barkatou_elmorsalani_convergence}.

\begin{proposition}
A point \(c^{*}\in \partial C\) is a fixed point of the return map \(F\) if and only if
\[
\nabla_{\partial C}d(c^{*}) = 0. \tag{3.2}\label{eq:fix_crit}
\]
In particular,
\[
\operatorname{Fix}(F) = \operatorname{Crit}(d). \tag{3.3}\label{eq:fix_crit_id}
\]
\end{proposition}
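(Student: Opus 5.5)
The plan is to prove the equivalence directly from the geometric construction of $F=\pi\circ\Phi$, cross-checking against the characterization (2.4) recalled in Subsection~\ref{subsec:properties}. For each $c\in\partial C$ write $x=\Phi(c)=c+d(c)\nu(c)\in\partial\Omega$. By definition $F(c)$ is the point where the inward normal ray $D(x,n(x))$ meets $\partial C$, so everything reduces to comparing $n(x)$ with $-\nu(c)$. Since the ray from $x$ in direction $-\nu(c)$ returns exactly to $c$, we get
\[
F(c)=c\iff n(\Phi(c))=-\nu(c).
\]
This uses that the reciprocal map $\pi$ picks a single, well-defined landing point; I take that from the construction in \cite{barkatou_elmorsalani_return}. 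The statement then follows once we show $n(\Phi(c))=-\nu(c)$ if and only if $\nabla_{\partial C}d(c)=0$.

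\textbf{Computing the tangent space of $\partial\Omega$ at $\Phi(c)$.} Differentiate $\Phi$ along $v\in T_c\partial C$:
\[
D\Phi(c)v=\bigl(I+d(c)W_c\bigr)v+\langle\nabla_{\partial C}d(c),v\rangle\,\nu(c),
\]
where $W_c=D\nu(c)$ is the Weingarten map. Because $C$ is convex, $W_c$ is positive semidefinite, so $I+d(c)W_c$ is invertible on $T_c\partial C$. Hence $D\Phi(c)$ is injective and its image is $T_x\partial\Omega$. The normal $n(x)$ is characterized by $\langle n(x),D\Phi(c)v\rangle=0$ for all $v$.

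\textbf{Both directions.} Test against $n=-\nu(c)$. Since $W_c v\in T_c\partial C$ is orthogonal to $\nu(c)$, the inner product equals $-\langle\nabla_{\partial C}d(c),v\rangle$.
\begin{itemize}
\item If $\nabla_{\partial C}d(c)=0$, this vanishes for every $v$, so $-\nu(c)$ is normal to $\partial\Omega$ at $x$. It points inward because $c+r\nu(c)\in\Omega$ for $r<d(c)$. Hence $n(x)=-\nu(c)$ and $F(c)=c$.
\item Conversely, if $F(c)=c$, then $n(x)=-\nu(c)$, and orthogonality forces $\langle\nabla_{\partial C}d(c),v\rangle=0$ for all $v$, i.e.\ $\nabla_{\partial C}d(c)=0$.
\end{itemize}
This gives (3.2), and (3.3) is the same equivalence stated as an equality of sets.

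\textbf{Main obstacle: regularity at $\Phi(c)$.} The argument needs $n(x)$ to exist at every $x=\Phi(c)$, and $\partial\Omega$ is only assumed to satisfy the $\mathcal O_C$ condition almost everywhere. I would handle this with the standing assumption $d\in C^2$. Together with the invertibility of $I+dW$, it makes $\Phi$ a $C^1$ immersion, so $\partial\Omega\setminus C$ is a $C^1$ hypersurface with a normal everywhere.

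A second point needs care: the inward ray from $x$ in direction $n(x)=-\nu(c)$ must first hit $\partial C$ at $c$ rather than at some other point. This holds because the segment $\{c+t\nu(c):0<t\le d(c)\}$ lies outside $C$, since $\nu(c)$ is the outward normal of a convex set. As a fallback, one can simply invoke (2.4) from \cite{barkatou_elmorsalani_convergence}, which is exactly the claim.
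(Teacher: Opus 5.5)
Your proof is correct, and it takes a genuinely different route from the paper. The paper's proof is only a citation of the characterization (2.4) from the companion convergence paper; you prove the equivalence from the definition of $F=\pi\circ\Phi$.

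Your argument has two steps:
\begin{enumerate}
\item The reduction $F(c)=c \iff n(\Phi(c))=-\nu(c)$. This uses $d(c)>0$, which holds because $C$ is a compact subset of the open set $\Omega$. Hence $\Phi(c)\neq c$, and $c$ lying on the inward ray forces $n(\Phi(c))=(c-\Phi(c))/|c-\Phi(c)|=-\nu(c)$.
\item The identity $\langle -\nu(c), D\Phi(c)v\rangle=-\langle\nabla_{\partial C}d(c),v\rangle$.
\end{enumerate}
Together these show the mechanism behind the result. The tangent plane of $\partial\Omega$ at $\Phi(c)$ is tilted away from $T_c\partial C$ exactly by the gradient term, so the normal round-trip closes up if and only if there is no tilt.

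Your version also makes explicit three things the citation leaves implicit:
\begin{itemize}
\item $I+d\,D\nu$ is automatically invertible by convexity, so no nonfocality hypothesis is needed here. This also sidesteps the paper's sign convention around $S_C$.
\item The inward orientation at $\Phi(c)$ is fixed by the radial structure of $\Omega$ along normal rays.
\item The argument only needs $\Phi\in C^1$, i.e.\ $d\in C^1$, rather than $d\in C^2$.
\end{itemize}

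One step should be tightened. To identify $T_{\Phi(c)}\partial\Omega$ with the image of $D\Phi(c)$, you need $\partial\Omega$ near $\Phi(c)$ to coincide with $\Phi$ of a neighborhood of $c$. That requires $\Phi$ to be an embedding, not merely an immersion. It follows from injectivity of $\Phi$ plus compactness of $\partial C$. Injectivity holds because the metric projection onto the convex set $C$ sends $c+t\nu(c)$ to $c$ for all $t\ge 0$.

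The paper's citation buys brevity, but it leaves both the geometric reason and the regularity actually required invisible.
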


\begin{proof}
This is a direct consequence of the characterization established in the global convergence analysis, where it was shown that a point of \(\partial C\) is invariant under the return map if and only if the tangential gradient of the thickness function vanishes at that point.
\end{proof}

\begin{remark}
This result shows that the return dynamics is entirely organized by the thickness landscape. In particular, the study of the equilibria of the return map reduces to the analysis of the critical points of the function \(d\) on \(\partial C\).
\end{remark}

\section{Morse structure of the thickness landscape}
\label{sec:morse}

In the remainder of the paper we assume that the thickness function \(d\) is a Morse function.

\begin{definition}
A smooth function on \(\partial C\) is called a \emph{Morse function} if all its critical points are nondegenerate, that is, if the Hessian at each critical point is non-singular \cite{milnor1963}.

For each critical point \(c^{*}\in \partial C\) we denote by
\[
\operatorname{ind}(c^{*})
\]
the Morse index, defined as the number of negative eigenvalues of the Hessian of \(d\) at \(c^{*}\), that is,
\[
\operatorname{ind}(c^{*}) = \# \{\lambda < 0:\lambda \text{ eigenvalue of } \operatorname{Hess}_{\partial C}(d)(c^{*})\}. \tag{4.2}\label{eq:morse_index}
\]
\end{definition}

\section{Topological constraints on equilibria}
\label{sec:topological}

In this section we exploit the Morse structure of the thickness function to derive global constraints on the equilibria of the return dynamics.

\subsection{Morse inequalities}
\label{subsec:morse_ineq}

We recall that the thickness function \(d\) is assumed to be a Morse function. In particular, all its critical points are nondegenerate.

Let \(b_{k}(\partial C)\) denote the \(k\)-th Betti number of \(\partial C\), that is, the rank of the homology group \(H_{k}(\partial C)\). Morse theory provides lower bounds on the number of critical points of \(d\) in terms of these topological invariants \cite{milnor1963,matsumoto2002,nicolaescu2011}.

\begin{theorem}[Morse inequalities]
Let \(d:\partial C\to \mathbb{R}_{+}\) be a Morse function. Then the number of critical points of \(d\) satisfies
\[
\#\operatorname{Crit}(d) \geq \sum_{k=0}^{N-1} b_{k}(\partial C). \tag{5.1}\label{eq:morse_ineq}
\]
\end{theorem}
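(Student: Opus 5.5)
The plan is to reduce the statement to the classical weak Morse inequalities on the compact manifold \(\partial C\). Those inequalities say that for each \(k\) the number \(m_k\) of critical points of index \(k\) satisfies \(m_k \geq b_k(\partial C)\). Summing over \(k\) then gives \eqref{eq:morse_ineq}. First I fix the setting. \(\partial C\) is a compact, connected, boundaryless hypersurface of dimension \(N-1\) and class \(C^2\). Since \(C\) is convex with nonempty interior, radial projection from an interior point is a homeomorphism onto \(S^{N-1}\), so in fact \(\partial C\) is a topological sphere. Homology is taken with coefficients in a field (say \(\mathbb{R}\)), so Betti numbers are dimensions. Because \(d\) is Morse on a compact manifold, the critical points are isolated by the Morse lemma, hence finite in number, and \(\#\operatorname{Crit}(d)=\sum_{k=0}^{N-1} m_k\).

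The core step is the standard handle-decomposition argument from \cite{milnor1963}. Consider the sublevel sets \(M^a=\{d\le a\}\). If \([a,b]\) contains no critical value, \(M^b\) deformation retracts onto \(M^a\). When \(a\) passes a single critical value with one critical point of index \(k\), \(M^{a+\varepsilon}\) is homotopy equivalent to \(M^{a-\varepsilon}\) with a \(k\)-cell attached. If several critical points share a critical value, I first perturb \(d\) slightly, or simply attach several cells at once. Consequently \(\partial C\) is homotopy equivalent to a CW complex with exactly \(m_k\) cells in dimension \(k\). Cellular homology then gives \(b_k(\partial C)=\dim H_k\le \dim C_k = m_k\), and summing over \(k\) from \(0\) to \(N-1\) yields the claim.

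The main obstacle is regularity. Milnor's argument uses a smooth function and gradient flows, whereas here \(d\) and \(\partial C\) are only assumed \(C^2\). I would handle this in two steps.

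\emph{Step 1.} A \(C^2\) function still has a locally Lipschitz (\(C^1\)) gradient field, so the flow exists and gives the deformation retractions between critical levels.

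\emph{Step 2.} The Morse lemma holds for \(C^2\) functions with nondegenerate critical points. At worst it holds up to a homeomorphic change of coordinates, which suffices for the cell attachment.

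Alternatively, I can avoid the issue entirely. Approximate \(d\) in \(C^2\) on a smooth model of \(\partial C\) (diffeomorphic via the sphere identification) by a smooth Morse function with the same critical points and indices. This works because nondegenerate critical points are stable under small \(C^2\) perturbations. The Betti numbers are topological invariants, so they are unaffected by passing to the smooth model.

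Finally, I note the consequence for this setting. Since \(\partial C\cong S^{N-1}\), the bound reads \(\#\operatorname{Crit}(d)\ge 2\): at least a minimum and a maximum. By the Proposition of Section \ref{sec:critical}, this gives at least two fixed points of \(F\).
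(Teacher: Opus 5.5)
Your proof is correct and follows the same route as the paper, which gives no argument of its own but cites the classical weak Morse inequalities (Milnor, Matsumoto, Nicolaescu); your count $b_k(\partial C)\le m_k$ from the CW decomposition, summed over $k$, is exactly that argument. Beyond this you add two things the paper lacks: the reduction to a smooth Morse function on $S^{N-1}$ (legitimate, since radial projection from an interior point is a $C^2$-diffeomorphism, the rays meeting $\partial C$ transversally, and nondegenerate critical points persist with their indices under small $C^2$ perturbations) reconciles the standing $C^2$ hypotheses with the smoothness that Milnor's argument and the paper's definition of a Morse function assume, and your observation that $\partial C\cong S^{N-1}$ for convex $C$ with nonempty interior shows the bound always reduces to $\#\operatorname{Crit}(d)\ge 2$, so the non-spherical and higher-genus cases the paper mentions elsewhere cannot occur.
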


\begin{remark}
In the present setting, this result acquires a dynamical interpretation, since critical points of \(d\) correspond exactly to equilibria of the return map.
\end{remark}

\subsection{Application to the return dynamics}
\label{subsec:topological_application}

Using the identification between critical points of the thickness function and fixed points of the return map, see \eqref{eq:fix_crit_id}, we immediately obtain the following result.

\begin{theorem}[Topological lower bound on equilibria]
Assume that the thickness function \(d\) is Morse. Then the number of equilibria of the return dynamics satisfies
\[
\#\operatorname{Fix}(F) \geq \sum_{k=0}^{N-1} b_{k}(\partial C). \tag{5.2}\label{eq:lower_bound}
\]
\end{theorem}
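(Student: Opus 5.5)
The plan is to combine the identification of equilibria with critical points, \eqref{eq:fix_crit_id}, with the Morse inequalities \eqref{eq:morse_ineq}. The statement is a direct corollary, so the work consists of checking that the hypotheses of the Morse inequalities hold in the present setting and then transporting the count.

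First, I verify that \(\partial C\) is an admissible manifold for Morse theory. Since \(C\) is compact and convex with nonempty interior, \(\partial C\) is a compact connected hypersurface without boundary, of dimension \(N-1\). Radial projection from an interior point identifies it with \(S^{N-1}\). Only homology groups \(H_k(\partial C)\) with \(0\le k\le N-1\) can be nonzero, which justifies the range of summation. By assumption \(d\) is \(C^2\) and Morse on \(\partial C\). Classical Morse theory requires only \(C^2\) regularity: the Morse lemma holds for \(C^2\) functions with nondegenerate critical points, and the handle decomposition of sublevel sets goes through. So the weak Morse inequalities \(m_k\ge b_k(\partial C)\) hold, where \(m_k\) is the number of critical points of index \(k\) as in \eqref{eq:morse_index}.

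Second, I note that nondegenerate critical points are isolated, so \(\operatorname{Crit}(d)\) is finite by compactness. Summing the weak inequalities over \(k=0,\dots,N-1\) gives \(\#\operatorname{Crit}(d)=\sum_k m_k\ge\sum_k b_k(\partial C)\), which is \eqref{eq:morse_ineq}. I then invoke the Proposition of Section \ref{sec:critical}, \(\operatorname{Fix}(F)=\operatorname{Crit}(d)\). This equality of sets gives equality of cardinalities, and \eqref{eq:lower_bound} follows.

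The only delicate point is the regularity step: confirming that the \(C^2\) hypotheses on \(\partial C\) and \(d\) suffice for the Morse inequalities in place of \(C^\infty\). If necessary, I would handle this by \(C^2\)-small perturbation or smoothing. A \(C^2\)-close smooth function has the same number of nondegenerate critical points, with the same indices, near each critical point of \(d\), and no others. So the count transfers from the smooth case.

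As a concluding remark, since \(\partial C\cong S^{N-1}\), the bound reads \(\#\operatorname{Fix}(F)\ge 2\), with \(b_0=b_{N-1}=1\) when \(N\ge2\).
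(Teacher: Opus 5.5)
Your proposal is correct and follows the paper's proof, which simply combines the Morse inequalities \eqref{eq:morse_ineq} with the identification \(\operatorname{Fix}(F)=\operatorname{Crit}(d)\) from \eqref{eq:fix_crit_id}; your additional checks (finiteness of the critical set, and \(C^2\) sufficiency via a \(C^2\)-small smooth perturbation) only make explicit what the paper takes for granted. Your closing observation is also correct: the boundary of any convex body satisfies \(\partial C\cong S^{N-1}\), so for \(N\ge 2\) the bound always reduces to \(\#\operatorname{Fix}(F)\ge 2\).
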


\begin{proof}
This follows directly from \eqref{eq:morse_ineq} together with the identity \eqref{eq:fix_crit_id}.
\end{proof}

\subsection{The case of spherical topology}
\label{subsec:spherical}

In many situations of interest, the boundary of the convex core is diffeomorphic to the sphere \(S^{N-1}\). In this case, the Betti numbers satisfy
\[
b_{0}=b_{N-1}=1,\qquad b_{k}=0\ \text{for }0<k<N-1. \tag{5.3}\label{eq:betti_sphere}
\]
We therefore obtain the following immediate corollary.

\begin{corollary}
If \(\partial C \simeq S^{N-1}\), then the return map admits at least two fixed points. In particular, the return dynamics always possesses at least one attracting and one repelling equilibrium.
\end{corollary}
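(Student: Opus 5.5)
The plan has two parts, matching the two assertions of the corollary: a count of fixed points, then their stability type.

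\emph{Counting.} I will apply the Topological lower bound on equilibria, inequality \eqref{eq:lower_bound}, with the sphere Betti numbers \eqref{eq:betti_sphere}. This gives
\[
\#\operatorname{Fix}(F)\geq b_0+b_{N-1}=2,
\]
assuming $N\geq 2$ so that $0\neq N-1$. Independently of Morse theory, I will also use compactness: $\partial C$ is compact and $d$ is continuous, so $d$ attains a global minimum at some $c_{\min}$ and a global maximum at some $c_{\max}$. Both are critical points, hence fixed points of $F$ by \eqref{eq:fix_crit_id}. If $c_{\min}=c_{\max}$, then $d$ is constant and every point is critical. That contradicts the Morse assumption, because critical points of a Morse function are isolated and $\partial C$ is not a finite set when $N\geq 2$. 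So the two extrema are distinct, which recovers the bound directly.

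\emph{Attracting and repelling.} I will interpret the minimum as attracting and the maximum as repelling. At $c_{\min}$ the Hessian is nondegenerate and positive semidefinite, hence positive definite, so the Morse index is $0$. I will then use the descent property \eqref{eq:descent} together with global convergence, reading "attracting" in the Lyapunov sense. Since $c_{\min}$ is a nondegenerate minimum, it is an isolated critical point and a strict local minimum of $V=\tfrac12 d^2$. Take a small neighbourhood containing no other critical point. The sublevel sets $\{d\leq d(c_{\min})+\varepsilon\}$ near $c_{\min}$ are forward-invariant by \eqref{eq:descent}. Orbits starting in them converge to a critical point inside them, and that point can only be $c_{\min}$.

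At $c_{\max}$ the index is $N-1$. No orbit starting at a point $c\neq c_{\max}$ with $d(c)<d(c_{\max})$ can converge to $c_{\max}$, since $d$ is nonincreasing along orbits and $d(F^n(c))\leq d(c)<d(c_{\max})$. Every nearby point other than $c_{\max}$ has strictly smaller thickness, because $c_{\max}$ is a strict local maximum. So $c_{\max}$ attracts nothing but itself, and in this sense it is repelling.

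\emph{Main obstacle.} The difficulty is what "attracting/repelling" means. If it is meant in the linear sense, i.e.\ spectral radius of $DF$ below or above $1$, this needs the linearization $DF(c^*)=I-A(c^*)\operatorname{Hess}_{\partial C}(d)(c^*)$ and spectral conditions on $A$ from the later local analysis, which are not yet available here. I would therefore state and prove the corollary with the Lyapunov/orbit-theoretic meaning given above, and defer the linear statement to Section~\ref{sec:local}.
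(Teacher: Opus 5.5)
Your counting step is the paper's argument: the corollary is stated as immediate from \eqref{eq:lower_bound} with the sphere Betti numbers \eqref{eq:betti_sphere}, and nothing more is written.

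Where you go further is the ``in particular'' clause. The paper asserts it without proof, and it does not follow from the count, since the summed inequality \eqref{eq:morse_ineq} says nothing about indices. Your compactness step shows that two of the equilibria are a global minimizer and a global maximizer of $d$. Your Lyapunov argument, using \eqref{eq:descent} and global convergence, then turns index $0$ and index $N-1$ into attraction and repulsion. This part uses neither the spherical topology nor the Morse inequalities, so it holds for any compact $\partial C$ of positive dimension with $d$ Morse. The paper's Morse route is what gives larger bounds for other topologies.

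Your caveat about the linear meaning is right. In Corollary~\ref{cor:stability_geo} the paper needs $A(c^{*})$ positive definite even for a maximum to be linearly repelling. For a minimum to be linearly attracting it needs, in addition, the spectral condition on $A(c^{*})\operatorname{Hess}_{\partial C}(d)(c^{*})$. So the unconditional ``always'' is justified only in your orbit-theoretic sense.

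One step needs a small repair. \eqref{eq:descent} makes the global sublevel set $\{d\le d(c_{\min})+\varepsilon\}$ forward-invariant, but not by itself its piece near $c_{\min}$. A priori $F$ could jump to another component, for example one near a second global minimizer. Use continuity of $F$ (Proposition~\ref{prop:diff_F}) as follows, with $B_r$ the ball of radius $r$ about $c_{\min}$:
\begin{itemize}
\item choose $\delta$ so that $c_{\min}$ is the only critical point and the strict minimizer of $d$ on $\overline{B_\delta}$;
\item choose $\eta<\delta$ with $F(B_\eta)\subset B_\delta$;
\item choose $\varepsilon$ smaller than the minimum of $d-d(c_{\min})$ on $\overline{B_\delta}\setminus B_\eta$.
\end{itemize}
Then $W=\{d\le d(c_{\min})+\varepsilon\}\cap B_\delta$ lies in $B_\eta$, is forward-invariant, and contains a neighbourhood of $c_{\min}$. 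Your convergence argument then goes through.

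At $c_{\max}$ you can also sharpen ``attracts nothing but itself''. Take $U$ with $\overline U$ free of other critical points. Global convergence gives that every orbit starting in $U\setminus\{c_{\max}\}$ eventually leaves $U$, which is the standard notion of a repelling fixed point.
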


\begin{remark}
The above result shows that even in the simplest topological setting, the return dynamics cannot be trivial. The existence of at least two equilibria is enforced purely by the topology of the boundary of the convex core.
\end{remark}

\section{Euler characteristic and equilibrium structure}
\label{sec:euler}

In addition to the lower bounds provided by Morse inequalities, the critical points of the thickness function satisfy a global constraint involving the Euler characteristic of the boundary \(\partial C\).

\subsection{Euler characteristic identity}
\label{subsec:euler_id}

We recall that the Euler characteristic of \(\partial C\) is given by
\[
\chi(\partial C) = \sum_{k=0}^{N-1}(-1)^{k}b_{k}(\partial C), \tag{6.1}\label{eq:euler_def}
\]
where \(b_{k}(\partial C)\) denote the Betti numbers.

Morse theory provides a corresponding identity in terms of the critical points of a Morse function \cite{milnor1963,matsumoto2002,nicolaescu2011}.

\begin{theorem}[Euler characteristic formula]
Let \(d:\partial C\to \mathbb{R}_{+}\) be a Morse function. Then
\[
\sum_{c^{*}\in \operatorname{Crit}(d)} (-1)^{\operatorname{ind}(c^{*})} = \chi(\partial C). \tag{6.2}\label{eq:euler_morse}
\]
\end{theorem}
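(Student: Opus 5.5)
The plan is to derive the identity from the handle decomposition of \(\partial C\) furnished by the Morse function \(d\), in the standard way \cite{milnor1963}. Since \(\partial C\) is compact and \(d\) is Morse, the critical points are isolated and therefore finite in number. We may perturb \(d\) slightly, keeping it Morse with the same critical points and indices (for instance by small local bumps near each critical point), so that distinct critical points have distinct critical values \(a_1<\dots<a_m\). Set \(M^{a}=\{c\in\partial C: d(c)\le a\}\) and choose regular values \(t_0<a_1<t_1<\dots<a_m<t_m\), so that \(M^{t_0}=\emptyset\) and \(M^{t_m}=\partial C\).

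The first key step is the two fundamental lemmas of Morse theory. If \([a,b]\) contains no critical value, then \(M^{a}\) is a deformation retract of \(M^{b}\); this uses the flow of \(-\nabla_{\partial C}d/|\nabla_{\partial C}d|^{2}\), which is complete on the compact set \(d^{-1}([a,b])\). If \([t_{i-1},t_i]\) contains exactly one critical point \(c_i\), of index \(k_i=\operatorname{ind}(c_i)\), then \(M^{t_i}\) has the homotopy type of \(M^{t_{i-1}}\) with a \(k_i\)-cell attached. This second lemma is proved using the Morse lemma: near \(c_i\) there are local coordinates in which \(d=d(c_i)-x_1^2-\dots-x_{k_i}^2+x_{k_i+1}^2+\dots\). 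The number of minus signs equals the index defined in \eqref{eq:morse_index}.

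A regularity caveat enters here. The Morse lemma and the gradient flow normally require more smoothness than the paper's standing assumption that \(d\) is \(C^2\). The \(C^2\) Morse lemma still holds, with only \(C^0\)/\(C^1\) charts, and the flow of a \(C^1\) vector field is fine. Alternatively, one can approximate \(d\) in the \(C^2\) topology by a smooth function with the same nondegenerate critical points and indices; this works because nondegeneracy is \(C^2\)-stable on a compact manifold. The smoothness of \(\partial C\) itself is assumed.

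The conclusion is that \(\partial C\) is homotopy equivalent to a finite CW complex with exactly one \(k\)-cell for each critical point of index \(k\). Euler characteristic is a homotopy invariant, and for a finite CW complex it equals the alternating count of cells, \(\sum_k(-1)^k \#\{k\text{-cells}\}\). This follows from cellular homology, since the alternating sum of the ranks of the chain groups equals the alternating sum of the Betti numbers by the rank–nullity theorem. Combining this with \eqref{eq:euler_def} yields \eqref{eq:euler_morse}.

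An alternative route, avoiding CW complexes, is the Poincaré–Hopf theorem applied to the vector field \(\nabla_{\partial C}d\). At a nondegenerate zero its index is \(\operatorname{sign}\det\operatorname{Hess}_{\partial C}(d)(c^*)=(-1)^{\operatorname{ind}(c^*)}\), and the sum of these indices is \(\chi(\partial C)\).

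The main obstacle is the cell-attachment lemma, specifically constructing the homotopy equivalence near the critical level. This is where most of the work lies. I would simply cite \cite{milnor1963} for it, and verify only that the hypotheses (compactness, nondegeneracy, and sufficient regularity) are met in the present setting.
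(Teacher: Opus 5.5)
Your argument is correct and follows the same classical route the paper relies on. The paper gives no proof of its own; it simply cites Milnor (and Matsumoto, Nicolaescu) for the Morse-theoretic cell decomposition, which is exactly what you reproduce, with Poincar\'e--Hopf as a valid alternative. One small precision: a $C^2$-small smoothing of $d$ preserves the number of critical points of each index, not the points themselves (they may move slightly), but that count is all the alternating sum $\sum_{c^*}(-1)^{\operatorname{ind}(c^*)}$ needs.
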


\subsection{Interpretation for the return dynamics}
\label{subsec:euler_dyn}

Using the identification \eqref{eq:fix_crit_id}, we obtain the following dynamical consequence. The precise structure of the operator governing the linearization will be clarified in Section \ref{sec:local}. At this stage, we focus on the relationship between the return map and the second-order geometry of the thickness function.

\begin{corollary}[Balance of equilibria]
The fixed points of the return map satisfy
\[
\sum_{c^{*}\in \operatorname{Fix}(F)} (-1)^{\operatorname{ind}(c^{*})} = \chi(\partial C). \tag{6.3}\label{eq:euler_dyn}
\]
\end{corollary}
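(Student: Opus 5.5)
The plan is to obtain the corollary by substituting the set identity \eqref{eq:fix_crit_id} into the Euler characteristic formula \eqref{eq:euler_morse}. Throughout we keep the standing assumption of Section~\ref{sec:morse} that \(d\) is a Morse function on the compact \(C^{2}\) manifold \(\partial C\). The quantity \(\operatorname{ind}(c^{*})\) on the left-hand side of \eqref{eq:euler_dyn} is read as the Morse index of \(c^{*}\) as a critical point of \(d\), as defined in \eqref{eq:morse_index}. This reading makes sense because, by the Proposition of Section~\ref{sec:critical}, every \(c^{*}\in\operatorname{Fix}(F)\) is a critical point of \(d\).

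First, I will check that both sides are well-defined, in particular that the sum is finite. Nondegenerate critical points are isolated by the Morse lemma; with \(d\in C^{2}\), the inverse function theorem applied to \(\nabla_{\partial C}d\) suffices. Compactness of \(\partial C\) then gives that \(\operatorname{Crit}(d)\) is finite. Hence \(\operatorname{Fix}(F)=\operatorname{Crit}(d)\) is a finite set, and the left-hand side of \eqref{eq:euler_dyn} is a finite sum of signs.

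Second, since \(\operatorname{Fix}(F)=\operatorname{Crit}(d)\) as sets by \eqref{eq:fix_crit_id}, and each summand \((-1)^{\operatorname{ind}(c^{*})}\) depends only on the point \(c^{*}\) and the Hessian of \(d\) there, the two sums
\[
\sum_{c^{*}\in\operatorname{Fix}(F)}(-1)^{\operatorname{ind}(c^{*})}
\quad\text{and}\quad
\sum_{c^{*}\in\operatorname{Crit}(d)}(-1)^{\operatorname{ind}(c^{*})}
\]
coincide term by term. Applying \eqref{eq:euler_morse} then yields \(\chi(\partial C)\), which is \eqref{eq:euler_dyn}.

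The only step that requires care is the regularity hypothesis under which the Morse-theoretic identity \eqref{eq:euler_morse} is invoked, since the paper only assumes \(d\in C^{2}\). I would handle this in one of two ways. One option is to note that the handle-decomposition proof of the Euler formula only needs a \(C^{2}\) Morse function on a compact manifold. The alternative is to use the Poincaré–Hopf theorem for the \(C^{1}\) vector field \(\nabla_{\partial C}d\). Its zeros are nondegenerate, and the local index at each zero equals \(\operatorname{sign}\det\operatorname{Hess}_{\partial C}(d)(c^{*})=(-1)^{\operatorname{ind}(c^{*})}\), so summing the indices gives \(\chi(\partial C)\) directly. Either route confirms \eqref{eq:euler_morse} in the present setting, and the corollary follows.
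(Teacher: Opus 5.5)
Your proposal is correct and is essentially the paper's argument: the paper obtains the corollary simply by substituting the identification $\operatorname{Fix}(F)=\operatorname{Crit}(d)$ from (3.3) into the Morse-theoretic Euler characteristic formula (6.2). Your extra checks go beyond what the paper writes and are sound: finiteness of $\operatorname{Crit}(d)$ via the inverse function theorem applied to $\nabla_{\partial C}d$, and the validity of (6.2) for a merely $C^{2}$ Morse function via Poincar\'e--Hopf for the gradient field, with local index $\operatorname{sign}\det\operatorname{Hess}_{\partial C}(d)(c^{*})=(-1)^{\operatorname{ind}(c^{*})}$.
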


\begin{remark}
This identity provides a global constraint on the structure of the equilibria of the return dynamics. In particular, it relates the number of attracting, repelling, and saddle-type equilibria through a topological invariant of the boundary \(\partial C\).
\end{remark}

\subsection{Examples}
\label{subsec:euler_examples}

\paragraph{Spherical case.}
If \(\partial C\simeq S^{N-1}\), then
\[
\chi(\partial C) = \begin{cases}
2, & \text{if } N-1 \text{ is even},\\
0, & \text{if } N-1 \text{ is odd}.
\end{cases} \tag{6.4}\label{eq:chi_sphere}
\]
In particular:
\begin{itemize}
\item if \(N\) is odd, the alternating sum of the indices of equilibria vanishes,
\item if \(N\) is even, the total balance is equal to 2.
\end{itemize}

\begin{remark}
This shows that the topology of the convex core boundary imposes a global constraint on the distribution of equilibria of the return map. In particular, the return dynamics cannot consist solely of attracting equilibria; saddle-type structures must be present in order to satisfy the Euler characteristic identity \eqref{eq:euler_dyn}.
\end{remark}

\section{Local dynamics near equilibria}
\label{sec:local}

In this section we investigate the behavior of the return map in a neighborhood of an equilibrium point. We show that the local structure of the dynamics is governed by the second-order geometry of the thickness function, and that near nondegenerate equilibria the return map behaves as an anisotropic preconditioned descent whose contraction properties are controlled by the interaction between the curvatures of \(\partial C\) and \(\partial \Omega\).

\subsection{Differentiability of the return map}
\label{subsec:diff_F}

We recall that the return map is defined as the composition
\[
F = \pi \circ \Phi,
\]
where
\[
\Phi(c) = c + d(c)\nu(c)
\]
is the radial map and \(\pi\) is the reciprocal map defined by inward normals to \(\partial \Omega\).

\begin{proposition}[Differentiability of the return map]
\label{prop:diff_F}
Assume that \(\partial C\) is of class \(C^{2}\) and that \(d\in C^{2}(\partial C)\). Then the boundary \(\partial \Omega\) defined by the radial parametrization
\[
x = c + d(c)\nu(c)
\]
is of class \(C^{1,\alpha}\) locally, and the return map
\[
F:\partial C\to \partial C
\]
is of class \(C^{1}\).
\end{proposition}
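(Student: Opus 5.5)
The plan is to prove the two assertions in order: first the regularity of the parametrized hypersurface \(\Phi(\partial C)\), then that of \(F=\pi\circ\Phi\), treating \(\pi\) as an implicitly defined map and obtaining its differentiability from the implicit function theorem. Throughout I work in a local chart of \(\partial C\). Since \(\partial C\) is \(C^{2}\), the normal field \(\nu\) is \(C^{1}\), and its differential is the Weingarten map \(W(c)=D\nu(c)\), which is symmetric and positive semidefinite by convexity of \(C\).

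\emph{Step 1: \(\Phi\) is a \(C^{1}\) embedding.} Since \(c\mapsto c\), \(\nu\) and \(d\) are all at least \(C^{1}\), the map \(\Phi(c)=c+d(c)\nu(c)\) is \(C^{1}\). For tangent vectors \(v\in T_{c}\partial C\) its differential is
\[
D\Phi(c)v=(I+d(c)W(c))v+\langle \nabla_{\partial C}d(c),v\rangle\,\nu(c).
\]
Because \(W\geq 0\) and \(d\geq 0\), the tangential block \(I+dW\) is invertible, so \(D\Phi(c)\) is injective and \(\Phi\) is an immersion. Injectivity of \(\Phi\) follows from convexity of \(C\): outward normal rays issued from distinct points of \(\partial C\) never meet. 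Together with compactness of \(\partial C\), this makes \(\Phi\) a \(C^{1}\) embedding, and \(\partial\Omega\setminus C\) is a \(C^{1}\) hypersurface. For the local \(C^{1,\alpha}\) statement, I would write \(\partial\Omega\) locally as a graph over its tangent plane and express the Hölder modulus of the graph gradient through the moduli of continuity of \(W\) and \(\nabla_{\partial C}d\) on a compact chart. I expect this estimate to be the delicate point of Step 1, because it needs a Hölder (not merely continuous) modulus for \(W\) and \(\nabla d\). If \(\alpha\) cannot be extracted, the argument below only uses the \(C^{1}\) structure.

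\emph{Step 2: a formula for the inward normal.} Inverting the tangential block in \(D\Phi\) gives an explicit expression for the inward unit normal to \(\partial\Omega\) at \(\Phi(c)\):
\[
\tilde n(c):=n(\Phi(c))=-\frac{\nu(c)-(I+d(c)W(c))^{-1}\nabla_{\partial C}d(c)}{\bigl|\nu(c)-(I+d(c)W(c))^{-1}\nabla_{\partial C}d(c)\bigr|}.
\]
This exhibits \(\tilde n\) as a composition of \(\nu\), \(W\), \(d\) and \(\nabla_{\partial C}d\).

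\emph{Step 3: \(F\) via the implicit function theorem.} I define
\[
G(c,c',t)=\Phi(c)+t\,\tilde n(c)-c',\qquad c'\in\partial C,\ t>0.
\]
Then \(F(c)\) is the first solution \(c'\) of \(G(c,c',t)=0\), which exists by the \(\mathcal{O}_{C}\) condition. The partial differential of \(G\) in \((c',t)\) is \((v,s)\mapsto s\,\tilde n-v\), and it is invertible exactly when \(\tilde n(c)\) is transversal to \(T_{F(c)}\partial C\). I would prove this transversality from convexity: the ray enters \(C\) and is not tangent to \(\partial C\) at its first hitting point, since tangency would force it to stay outside \(C\) there, and the first hitting point is unique. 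The implicit function theorem then gives \(F\in C^{k}\) whenever \(G\in C^{k}\).

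\emph{Main obstacle.} The hard part is \(C^{1}\) regularity of \(G\) in \(c\), which is the same as \(C^{1}\) regularity of \(\tilde n\). By the formula in Step 2, \(\tilde n\) involves \(W\) and \(\nabla d\), which under the stated hypotheses are a priori only \(C^{0}\) and \(C^{1}\) respectively. To close this gap I would first use that the Weingarten map of a convex \(C^{2}\) hypersurface can be handled through the \(C^{1}\) support and Gauss maps of \(C\). Concretely, I would rewrite \(\pi\) as a composition of the inverse Gauss map of \(C\) with an angle correction built only from \(\nu\) and \(\nabla_{\partial C}d\), so that no derivative of \(W\) is ever required. This is the step I expect to determine whether the argument succeeds as stated.
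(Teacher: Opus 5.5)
Your Steps 1 and 2 are correct, but the gap is exactly the step you single out, and the proposed way around it cannot work: the dependence on \(W\) is intrinsic, not an artifact of the parametrization. The inward normal of \(\partial\Omega\) at \(\Phi(c)\) is determined by the set \(\partial\Omega\) alone, and your own formula shows it contains \((I+d(c)W(c))^{-1}\nabla_{\partial C}d(c)\). So no ``angle correction built only from \(\nu\) and \(\nabla_{\partial C}d\)'' can reproduce it. Reparametrizing by the inverse Gauss map, which would also require positive curvature, just moves \(W\) into the chain rule, \(\nabla_u(d\circ\nu^{-1})=W^{-1}\nabla_{\partial C}d\), and the same expression \(u-(I+dW)^{-1}\nabla_{\partial C}d\) comes back. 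In fact both conclusions can fail under the stated hypotheses. Take \(N=2\) and let \(\partial C\) be a closed convex \(C^{2}\) curve, parametrized by arc length \(s\), whose curvature \(\kappa>0\) is continuous but not Hölder continuous at \(s_{0}\). Let \(d=d_{0}+\varepsilon\eta(s)\) with \(\eta\) smooth, \(\eta'(s_{0})\neq 0\) and \(\varepsilon\) small. Then \(\Phi'=(1+d\kappa)T+d'\nu\), so the inward normal of \(\partial\Omega\) at \(\Phi(s)\) makes the angle \(\arctan\bigl(d'(s)/(1+d(s)\kappa(s))\bigr)\) with \(-\nu(s)\). Since \(d'(s_{0})\neq 0\), this angle is Hölder (or \(C^{1}\)) near \(s_{0}\) only if \(\kappa\) is. Because \(\Phi^{-1}\) is Lipschitz, \(\partial\Omega\) is therefore not \(C^{1,\alpha}\) near \(\Phi(s_{0})\) for any \(\alpha\). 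Next, the entry point on \(\partial C\) of the ray from \(x\) in direction \(\omega\) is a \(C^{2}\) function of \((x,\omega)\). Its \(\omega\)-derivative is nonzero, because the ray has length close to \(d_{0}>0\) and crosses \(\partial C\) transversally. Hence the normal direction is locally a \(C^{2}\) function of \((\Phi(s),F(s))\), and so \(F\) cannot be \(C^{1}\), or even Hölder, near \(s_{0}\).

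So what is missing is a stronger hypothesis, not a cleverer parametrization. If \(\partial C\in C^{3}\), then \(W\in C^{1}\), your formula gives \(\tilde n\in C^{1}\), \(\partial\Omega\) is \(C^{2}\) (in particular \(C^{1,\alpha}\)), and your implicit-function argument yields \(F\in C^{1}\). For the \(C^{1,\alpha}\) assertion alone, \(\partial C\in C^{2,\alpha}\) suffices. The paper's proof does not get around this either. It takes the \(C^{1,\alpha}\) regularity of \(\partial\Omega\) from a citation and then asserts that \(\pi\) is \(C^{1,\alpha}\), although the normal field is then only \(C^{\alpha}\). But \(\pi(x)=x+t(x)n(x)\) is in general no more regular than \(n\). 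Your diagnosis that \(F\in C^{1}\) needs \(\tilde n\in C^{1}\), i.e. \(\partial\Omega\in C^{2}\), is the accurate one. There is also a secondary gap in Step 3. The \(\mathcal{O}_{C}\) condition only says that the inward normal ray meets \(C\), so the ray may graze \(\partial C\) inside a supporting hyperplane, where your partial differential \((v,s)\mapsto s\tilde n-v\) is not invertible. Your transversality argument therefore needs the additional assumption that these rays meet \(\operatorname{int}C\).
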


\begin{proof}
The map \(\Phi(c)=c+d(c)\nu(c)\) is of class \(C^{1}\), since \(d\in C^{2}(\partial C)\) and the normal field \(\nu\) is \(C^{1}\) on \(\partial C\).

The regularity of the reciprocal map \(\pi\) depends on the regularity of the boundary \(\partial \Omega\). Since \(\partial \Omega\) is parametrized by \(\Phi\), its regularity is inherited from that of \(d\) and \(\partial C\).

Results of Barkatou on the regularity of admissible domains \cite{barkatou_preprint} show that, under the above assumptions, the boundary
\(\partial \Omega\) is of class \(C^{1,\alpha}\) locally. In particular, the inward normal field to \(\partial \Omega\) is \(C^{\alpha}\), and the reciprocal map \(\pi\) is of class \(C^{1,\alpha}\) (see also Proposition 4.2 in \cite{barkatou_elmorsalani_return}).

Therefore, the composition
\[
F = \pi \circ \Phi
\]
is of class \(C^{1}\).
\end{proof}

\begin{remark}[On optimal regularity]
\label{rem:regularity}
The assumption \(d\in C^{2}(\partial C)\) is sufficient to ensure differentiability of the return map. However, it is likely not optimal. Recent results by Barkatou \cite{barkatou_preprint} suggest that the regularity of \(\partial \Omega\) can be characterized more precisely in terms of the regularity of the thickness function. A refined analysis of minimal regularity conditions for the differentiability of \(F\) remains an interesting open problem. For instance, one may conjecture that \(d\in C^{1,1}(\partial C)\) (i.e., Lipschitz gradient) is enough to guarantee that \(\partial \Omega\) is \(C^{1,1}\) and that \(F\) is \(C^{1}\), but a rigorous proof would require a careful study of the regularity of the reciprocal map under such minimal hypotheses.
\end{remark}

\begin{lemma}[Differential of the radial map]
\label{lem:grad_phi}
Let \(c\in \partial C\) and let \(v\in T_{c}(\partial C)\). Then the differential of the radial map
\[
\Phi(c)=c+d(c)\nu(c)
\]
is given by
\[
D\Phi(c)[v] = v + \langle \nabla_{\partial C}d(c),v\rangle \nu(c) + d(c)D\nu(c)[v]. \tag{7.1}\label{eq:grad_phi}
\]
Equivalently, if \(S_{C}\) denotes the shape operator of \(\partial C\) with the convention \(D\nu = -S_{C}\), then
\[
D\Phi(c)[v] = (I - d(c)S_{C}(c))v + \langle \nabla_{\partial C}d(c),v\rangle \nu(c). \tag{7.2}\label{eq:grad_phi_shape}
\]
\end{lemma}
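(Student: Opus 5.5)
The plan is to differentiate the radial map \(\Phi(c)=c+d(c)\nu(c)\) along a curve in \(\partial C\) and apply the product rule. Fix \(c\in\partial C\) and \(v\in T_c(\partial C)\). Choose a \(C^1\) curve \(\gamma:(-\varepsilon,\varepsilon)\to\partial C\) with \(\gamma(0)=c\) and \(\gamma'(0)=v\). Such a curve exists because \(\partial C\) is a \(C^2\) manifold.

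The first step is to write
\[
\Phi(\gamma(t))=\gamma(t)+d(\gamma(t))\,\nu(\gamma(t))
\]
and differentiate at \(t=0\). This is legitimate because \(d\in C^2(\partial C)\) and \(\nu\in C^1\), as already noted in the proof of Proposition \ref{prop:diff_F}. There are three terms to handle:
\begin{itemize}
\item The first term contributes \(\gamma'(0)=v\).
\item The scalar factor contributes \(\frac{d}{dt}\big|_{t=0}d(\gamma(t))=Dd(c)[v]\). By definition of the tangential gradient this equals \(\langle\nabla_{\partial C}d(c),v\rangle\), which multiplies \(\nu(c)\).
\item The vector factor contributes \(d(c)\,\frac{d}{dt}\big|_{t=0}\nu(\gamma(t))=d(c)D\nu(c)[v]\).
\end{itemize}
Adding these gives \eqref{eq:grad_phi}. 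The result does not depend on the choice of \(\gamma\), since each term depends only on \(v\).

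The second step is to rewrite \(D\nu(c)[v]\) using the shape operator. Differentiating \(\langle\nu,\nu\rangle=1\) along \(\gamma\) gives \(\langle D\nu(c)[v],\nu(c)\rangle=0\). Hence \(D\nu(c)\) maps \(T_c(\partial C)\) into itself, and the Weingarten map \(S_C(c):=-D\nu(c)\) is a well-defined endomorphism of \(T_c(\partial C)\). This is the stated convention. Substituting it into \(d(c)D\nu(c)[v]\) turns the term into \(-d(c)S_C(c)v\). Combining with \(v\) gives \((I-d(c)S_C(c))v\), which is \eqref{eq:grad_phi_shape}. The decomposition also separates the tangential part \((I-dS_C)v\) from the normal part \(\langle\nabla_{\partial C}d,v\rangle\nu\), which will be useful later.

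The argument is a routine product-rule computation. The only point needing care is sign and regularity bookkeeping. First, \(\nu\) is only \(C^1\) when \(\partial C\) is \(C^2\), so \(D\nu\) exists but no higher derivatives are used. Second, the sign convention \(D\nu=-S_C\) must be matched with the outward normal. With this convention \(S_C\) is positive semidefinite for convex \(C\), consistent with the factor \(I-dS_C\).
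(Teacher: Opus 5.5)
Your proof is correct and follows the paper's argument: the product rule on \(\Phi=c+d\,\nu\), the identification \(Dd(c)[v]=\langle\nabla_{\partial C}d(c),v\rangle\), and substitution of \(D\nu=-S_C\), plus your useful extra check that \(D\nu(c)[v]\perp\nu(c)\). One correction to your closing remark, which the proof does not use: with the outward normal and \(D\nu=-S_C\), \(S_C\) is negative (not positive) semidefinite for convex \(C\); for example, on the unit sphere \(\nu(c)=c\) gives \(D\nu=I\), \(S_C=-I\) and \(I-dS_C=(1+d)I\), consistent with \(\Phi(c)=(1+d)c\) for constant \(d\).
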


\begin{proof}
Differentiating \(\Phi(c)=c+d(c)\nu(c)\) in the tangent direction \(v\) gives
\[
D\Phi(c)[v] = v + Dd(c)[v]\nu(c) + d(c)D\nu(c)[v].
\]
Since \(Dd(c)[v] = \langle \nabla_{\partial C}d(c),v\rangle\), we obtain \eqref{eq:grad_phi}. The form \eqref{eq:grad_phi_shape} follows from the identity \(D\nu = -S_{C}\).
\end{proof}

\begin{remark}[On the differential of the return map]
\label{rem:diff_F}
Since \(F = \pi \circ \Phi\), the derivative of the return map is formally given by
\[
DF(c) = D\pi(\Phi(c))\, D\Phi(c).
\]
While Lemma \ref{lem:grad_phi} provides an explicit expression for \(D\Phi(c)\), the differential of the reciprocal map \(\pi\) depends on the local geometry of \(\partial \Omega\), the inward normal flow, and the intersection mechanism with \(\partial C\). For this reason, a simple closed formula for \(DF\) is not available in full generality.

Accordingly, in the general case we work with the operator form
\[
DF(c^{*}) = I - A(c^{*})\operatorname{Hess}_{\partial C}(d)(c^{*}),
\]
which captures the first-order structure of the dynamics near equilibrium. More explicit formulas can be obtained under additional geometric assumptions, for instance when principal directions are aligned and the round-trip is nonlocal.
\end{remark}

\subsection{Local expansion of the return map}
\label{subsec:local_expansion}

Let \(c^{*}\in \partial C\) be a fixed point of the return map. By \eqref{eq:fixed_grad}, this is equivalent to
\[
\nabla_{\partial C}d(c^{*}) = 0. \tag{7.3}\label{eq:fixed_grad_local}
\]
We work in local coordinates on \(\partial C\) centered at \(c^{*}\).

\begin{proposition}[First-order expansion of the return map]
\label{prop:first_expansion}
Let \(c^{*}\) be a nondegenerate critical point of \(d\). Then, in local coordinates centered at \(c^{*}\), the return map admits the expansion
\[
F(c) = c - A(c^{*})\nabla_{\partial C}d(c) + R(c), \tag{7.4}\label{eq:first_expansion}
\]
where
\[
\|R(c)\| = o(\|c-c^{*}\|)\qquad \text{as } c\to c^{*}, \tag{7.5}\label{eq:remainder}
\]
and the linear operator
\[
A(c^{*}):T_{c^{*}}(\partial C)\to T_{c^{*}}(\partial C)
\]
is defined by
\[
A(c^{*}) = (I - DF(c^{*}))\bigl(\operatorname{Hess}_{\partial C}(d)(c^{*})\bigr)^{-1}. \tag{7.6}\label{eq:A_def}
\]
\end{proposition}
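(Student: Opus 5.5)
The statement is essentially a Taylor expansion of the \(C^{1}\) map \(F\) at its fixed point, rewritten using the gradient of \(d\). The operator \(A(c^{*})\) is \emph{defined} by \eqref{eq:A_def}. So the content to verify is that \(c - A(c^{*})\nabla_{\partial C}d(c)\) agrees with \(F(c)\) to first order at \(c^{*}\). I will work throughout in a fixed chart of \(\partial C\) centered at \(c^{*}\), identifying a neighbourhood with an open set of \(T_{c^{*}}(\partial C)\simeq\mathbb{R}^{N-1}\). Gradient and Hessian are then expressed in these coordinates, with the metric at \(c^{*}\) used to identify \(\nabla d(c^{*})\) and the Hessian with linear maps.

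First, I need \(A(c^{*})\) to be well defined. The Hessian at \(c^{*}\) is invertible because \(c^{*}\) is nondegenerate. In addition, \(DF(c^{*})\) exists and is a linear map \(T_{c^{*}}\partial C\to T_{c^{*}}\partial C\) by Proposition \ref{prop:diff_F}, using \(F(c^{*})=c^{*}\) from \eqref{eq:fix_crit_id}.

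Second, \(F\in C^{1}\) gives the first-order Taylor expansion
\[
F(c) = c^{*} + DF(c^{*})(c-c^{*}) + o(\|c-c^{*}\|).
\]
Third, since \(d\in C^{2}\) and \(\nabla_{\partial C}d(c^{*})=0\), the gradient expands as
\[
\nabla_{\partial C}d(c) = \operatorname{Hess}_{\partial C}(d)(c^{*})(c-c^{*}) + o(\|c-c^{*}\|).
\]
Here I must check that in coordinates the Riemannian gradient differs from the Euclidean coordinate gradient by a metric factor \(g^{-1}(c)\). Because \(g^{-1}(c)=g^{-1}(c^{*})+O(\|c-c^{*}\|)\) and the gradient itself is \(O(\|c-c^{*}\|)\), this discrepancy is \(O(\|c-c^{*}\|^{2})\). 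The Christoffel terms in the Hessian vanish at a critical point, so the intrinsic Hessian coincides with the coordinate Hessian there.

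Fourth, I apply the bounded linear operator \(A(c^{*})\) to the gradient expansion:
\[
A(c^{*})\nabla_{\partial C}d(c) = (I-DF(c^{*}))(c-c^{*}) + o(\|c-c^{*}\|).
\]
Finally, I define \(R(c) := F(c) - c + A(c^{*})\nabla_{\partial C}d(c)\). Substituting the two expansions gives
\[
R(c) = c^{*} + DF(c^{*})(c-c^{*}) - c + (I-DF(c^{*}))(c-c^{*}) + o(\|c-c^{*}\|) = o(\|c-c^{*}\|),
\]
which proves \eqref{eq:remainder}.

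The main obstacle is the bookkeeping between intrinsic and coordinate objects in the third step. One must check that the Riemannian gradient, the intrinsic Hessian and the coordinate difference \(c-c^{*}\) agree to first order at \(c^{*}\), so that the error terms are genuinely \(o(\|c-c^{*}\|)\). This uses only \(d\in C^{2}\), \(\partial C\in C^{2}\), and the vanishing of the gradient at \(c^{*}\). Apart from that step, the argument is a formal consequence of Proposition \ref{prop:diff_F} and the definition \eqref{eq:A_def}.
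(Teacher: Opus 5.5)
Your proposal is correct and follows the same route as the paper. Both arguments take a first-order Taylor expansion of the \(C^{1}\) map \(F\) at the fixed point \(c^{*}\), combine it with \(\nabla_{\partial C}d(c)=\operatorname{Hess}_{\partial C}(d)(c^{*})(c-c^{*})+o(\|c-c^{*}\|)\), and let the definition of \(A(c^{*})\) cancel the first-order terms. The paper says ``solve for \(c-c^{*}\) and substitute,'' whereas you apply \(A(c^{*})\) directly to the gradient expansion and also check the chart bookkeeping (the metric factor \(g^{-1}\), and the vanishing of the Christoffel terms at a critical point); this makes the \(o(\|c-c^{*}\|)\) claim more carefully justified without changing the argument.
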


\begin{proof}
By Proposition \ref{prop:diff_F}, the map \(F\) is of class \(C^{1}\) near \(c^{*}\), hence
\[
F(c) = c^{*} + DF(c^{*})(c-c^{*}) + o(\|c-c^{*}\|).
\]
Since \(c^{*}\) is a critical point of \(d\), one has \(\nabla_{\partial C}d(c^{*})=0\). Because \(c^{*}\) is nondegenerate, the Hessian \(\operatorname{Hess}_{\partial C}(d)(c^{*})\) is invertible, and
\[
\nabla_{\partial C}d(c) = \operatorname{Hess}_{\partial C}(d)(c^{*})(c-c^{*}) + o(\|c-c^{*}\|).
\]
Solving for \((c-c^{*})\) in terms of \(\nabla_{\partial C}d(c)\) and substituting into the expansion of \(F(c)\) yields \eqref{eq:first_expansion}, with \(A(c^{*})\) given by \eqref{eq:A_def}.
\end{proof}

\begin{remark}[Geometric origin of the operator]
\label{rem:geometric_origin}
The operator \(A(c^{*})\) arises from the differential of the return map and encodes the local geometry of the round-trip construction defining \(F\). It acts on the tangent space \(T_{c^{*}}(\partial C)\) and can be interpreted as a local preconditioning operator.
\end{remark}

\begin{assumption}[Ellipticity of the preconditioning operator]
\label{ass:elliptic}
Let \(c^{*}\) be a nondegenerate critical point of \(d\). We assume that the operator
\[
A(c^{*}):T_{c^{*}}(\partial C)\to T_{c^{*}}(\partial C)
\]
is symmetric positive definite with respect to the induced Riemannian metric on \(\partial C\).
\end{assumption}

\begin{remark}[On the symmetry of \(A(c^{*})\)]
\label{rem:symmetry}
The symmetry of \(A(c^{*})\) is not automatically guaranteed by the geometry; it is an additional condition that reflects a certain compatibility between the radial map \(\Phi\) and the reciprocal map \(\pi\). Under the alignment assumptions of Proposition \ref{prop:ellipticity} below, symmetry holds because the two maps are simultaneously diagonalizable. In more general situations, \(A(c^{*})\) may be non-symmetric; the linearization then involves a non-selfadjoint operator, which can lead to more complex dynamical behavior such as rotational effects. In the present work we focus on the symmetric positive definite case, which already captures many natural geometric configurations.
\end{remark}

\begin{remark}[Explicit structure under additional geometric assumptions]
\label{rem:explicit}
In general, the operator \(A(c^{*})\) is defined abstractly through the linearization of the return map. However, under suitable geometric assumptions—in particular, nonlocality and alignment of principal directions between \(\partial C\) and \(\partial \Omega\) along the round-trip—one can express the eigenvalues of \(DF(c^{*})\) and hence those of \(A(c^{*})\) in terms of the principal curvatures of the two boundaries \cite{barkatou_elmorsalani_return}.
\end{remark}

\begin{proposition}[A sufficient geometric condition for ellipticity]
\label{prop:ellipticity}
Let \(c^{*}\) be a nondegenerate local minimum of \(d\), and let \(x^{*}=\Phi(c^{*})\in \partial \Omega\).

Assume that:
\begin{enumerate}
\item \(\partial C\) and \(\partial \Omega\) are of class \(C^{2}\) near \(c^{*}\) and \(x^{*}\);
\item the normal round-trip is nonfocal, i.e.
\[
1 - d(c^{*})\kappa_{i}^{C} > 0,\quad 1 - d(c^{*})\kappa_{i}^{\Omega} > 0;
\]
\item the principal directions of \(\partial C\) at \(c^{*}\) and of \(\partial \Omega\) at \(x^{*}\) are aligned by the round-trip;
\item the principal curvatures satisfy
\[
\kappa_{i}^{\Omega} < \kappa_{i}^{C}\quad \text{for all } i.
\]
\end{enumerate}
Then the operator \(A(c^{*})\) is symmetric positive definite.
\end{proposition}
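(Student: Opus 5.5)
Since $A(c^{*})$ is defined by \eqref{eq:A_def} as $(I-DF(c^{*}))\operatorname{Hess}_{\partial C}(d)(c^{*})^{-1}$, the proof reduces to three steps: compute $DF(c^{*})$ and $\operatorname{Hess}_{\partial C}(d)(c^{*})$ in a common basis, check that they commute, and determine the sign of each eigenvalue ratio.

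\textbf{Step 1: a common basis.} I will use the orthonormal principal frame $e_{1},\dots,e_{N-1}$ of $T_{c^{*}}(\partial C)$, with $S_{C}(c^{*})e_{i}=\kappa_{i}^{C}e_{i}$. Because $\nabla_{\partial C}d(c^{*})=0$, Lemma \ref{lem:grad_phi} gives
\[
D\Phi(c^{*})e_{i}=(1-d(c^{*})\kappa_{i}^{C})e_{i}.
\]
So $D\Phi(c^{*})$ is diagonal in this frame. By the nonfocality hypothesis its entries are positive. In particular $T_{x^{*}}\partial\Omega$ is parallel to $T_{c^{*}}\partial C$, the inward normal $n(x^{*})$ equals $-\nu(c^{*})$, and the round-trip returns to $c^{*}$ along the same segment of length $d^{*}=d(c^{*})$.

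\textbf{Step 2: the reciprocal map.} Write $\pi(x)=x+t(x)n(x)$, where $t(x)$ is fixed by the condition $\pi(x)\in\partial C$. Differentiating at $x^{*}$ in the direction $\Phi_{*}e_{i}$, and using $Dn=S_{\Omega}$ in the inward-normal convention, gives a tangential component of the form
\[
D\pi(x^{*})e_{i}=(1-d^{*}\kappa_{i}^{\Omega})\,e_{i},
\]
up to the normalisation of the tangent vectors. The normal component is absorbed by the constraint that $\pi$ maps into $\partial C$. By the alignment hypothesis, $S_{\Omega}(x^{*})$ is diagonal in the transported frame, so $D\pi(x^{*})$ is diagonal as well. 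The multiplier for $DF(c^{*})=D\pi(x^{*})D\Phi(c^{*})$ in direction $i$ is then a ratio $\lambda_{i}$ built from $1-d^{*}\kappa_{i}^{C}$ and $1-d^{*}\kappa_{i}^{\Omega}$; this is the nonlocal formula referred to in Remark \ref{rem:explicit}.

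The main obstacle is getting this multiplier exactly right: signs, the orientation of $n$ versus $\nu$, and whether it is $\lambda_{i}$ or $\lambda_{i}^{-1}$. I would first check it on concentric spheres, where $F$ is the identity and $\lambda_{i}=1$. The target is
\[
1-\lambda_{i}=\frac{d^{*}(\kappa_{i}^{C}-\kappa_{i}^{\Omega})}{1-d^{*}\kappa_{i}^{\Omega}},
\]
or an analogous expression with a positive denominator guaranteed by nonfocality. Its sign must be that of $\kappa_{i}^{C}-\kappa_{i}^{\Omega}$, which is positive under hypothesis 4.

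\textbf{Step 3: the Hessian and positivity.} Comparing the second fundamental forms of the graph $\partial\Omega$ over $\partial C$ at a critical point of $d$ shows that the Hessian is diagonal in the same frame, with
\[
\operatorname{Hess}_{\partial C}(d)(c^{*})e_{i}=h_{i}e_{i},\qquad h_{i}=\frac{(1-d^{*}\kappa_{i}^{C})(\kappa_{i}^{C}-\kappa_{i}^{\Omega})}{1-d^{*}\kappa_{i}^{\Omega}}\cdot(\text{positive factor}).
\]
This expression is consistent with $c^{*}$ being a nondegenerate minimum, since $h_{i}>0$.

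With both operators diagonal in the orthonormal frame $\{e_{i}\}$, $A(c^{*})$ is diagonal with entries $a_{i}=(1-\lambda_{i})/h_{i}$. A diagonal matrix in an orthonormal basis is symmetric. Each $a_{i}$ is a quotient of two quantities with the sign of $\kappa_{i}^{C}-\kappa_{i}^{\Omega}$, with the nonfocal factors positive, so $a_{i}>0$. Hence $A(c^{*})$ is symmetric positive definite.

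As a fallback if the exact form of $h_{i}$ proves delicate, I would avoid computing it. Positivity $h_{i}>0$ follows from the minimum hypothesis alone, and $1-\lambda_{i}>0$ comes from Step 2. These two facts already give $a_{i}>0$, so only the diagonal structure from alignment is genuinely needed.
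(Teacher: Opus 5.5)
Your strategy is the paper's own: diagonalize $D\Phi(c^{*})$, $D\pi(x^{*})$ and $\operatorname{Hess}_{\partial C}(d)(c^{*})$ in the principal frame and read off $a_i=(1-\lambda_i)/h_i$. Your fallback (``$h_i>0$ from minimality, $1-\lambda_i>0$ from Step 2'') is exactly the paper's proof. The gap is the sign of $1-\lambda_i$ relative to $h_i$, which is the only step with content. You never derive it, and it goes the wrong way.

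Your own Step 2 already conflicts with the target you adopt. Differentiating $\pi(x)=x+t(x)n(x)$ gives $Dt(x^{*})=0$ and $D\pi(x^{*})=I-d^{*}W_\Omega$, where $W_\Omega$ is the differential of the outward unit normal of $\partial\Omega$ at $x^{*}$. So $DF(c^{*})=(I-d^{*}W_\Omega)\,D\Phi(c^{*})$ is a product, not the ratio $(1-d^{*}\kappa_i^{C})/(1-d^{*}\kappa_i^{\Omega})$ taken from the paper. The concentric-sphere check you propose refutes that ratio: there $F$ is the identity, yet the two curvatures are $1$ and $1/R$.

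The decisive error is in Step 3 and in the fallback. $h_i$ and $\kappa_i^{\Omega}$ are not independent data, because $\partial\Omega$ is the normal graph of $d$ over $\partial C$. Write $W=D\nu(c^{*})$, $B=D\Phi(c^{*})=I+d^{*}W$ and $H=\operatorname{Hess}_{\partial C}(d)(c^{*})$. The outward normal of $\partial\Omega$ at $\Phi(c)$ is proportional to $\nu(c)-(I+d(c)D\nu(c))^{-1}\nabla_{\partial C}d(c)$. Differentiating at $c^{*}$, where $\nabla_{\partial C}d=0$, gives $W_\Omega B=W-B^{-1}H$. Hence
\[
D\pi(x^{*})=I-d^{*}WB^{-1}+d^{*}B^{-1}HB^{-1}=B^{-1}+d^{*}B^{-1}HB^{-1},\qquad DF(c^{*})=I+d^{*}B^{-1}H,
\]
so $A(c^{*})=-d(c^{*})\,D\Phi(c^{*})^{-1}$, with or without alignment. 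This operator is symmetric but negative definite, since $D\Phi(c^{*})=I+d^{*}D\nu(c^{*})$ is positive definite. In the paper's convention its eigenvalues are the nonfocal factors $1-d^{*}\kappa_i^{C}>0$. In your notation, $1-\lambda_i=-d^{*}h_i/(1-d^{*}\kappa_i^{C})$. So at a nondegenerate minimum $\lambda_i>1$: the fallback's claim $1-\lambda_i>0$ is exactly what fails, and your formula for $h_i$ has the wrong sign.

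Here is a concrete check. Take $N=2$, $C$ the unit disk, $\Omega=\{x^2/a^2+y^2/b^2<1\}$ with $1<b<\min(a,2)$, and $c^{*}=(0,1)$.
\begin{itemize}
\item $d$ has a nondegenerate minimum at $c^{*}$, with $d^{*}=b-1$ and $d''(c^{*})=b(1-b^2/a^2)$.
\item If curvatures of convex boundaries are counted positive, as in the paper's curvature-interpretation remark, hypotheses (2)--(4) all hold, with $\kappa^{C}=1>\kappa^{\Omega}=b/a^2$.
\item For $c$ near $c^{*}$ with positive abscissa, $\Phi(c)=(x_0,y_0)$ lies in the open first quadrant. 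Along the inward normal ray from that point, $xy'-yx'$ is constant with the sign of $x_0y_0(a^{-2}-b^{-2})<0$. So the polar angle decreases, and $F$ pushes points away from $c^{*}$ toward the maximum of $d$.
\item Quantitatively, $F'(c^{*})=1+(b-1)(1-b^2/a^2)>1$ and $A(c^{*})=-(b-1)/b<0$.
\end{itemize}
So the gap cannot be closed: the statement is false as written. Near equilibria the return map is a preconditioned gradient ascent, and minima of $d$ repel. The paper's proof has the same two defects: the inverted factor $(1-d^{*}\kappa_i^{\Omega})^{-1}$ for $D\pi$, and the tacit assumption that $h_i$ is independent of $\kappa_i^{\Omega}$.
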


\begin{proof}[Formal computation under alignment assumptions]
Under the alignment assumption, the operators \(D\Phi(c^{*})\) and \(D\pi(x^{*})\) are simultaneously diagonalizable in an orthonormal basis of \(T_{c^{*}}(\partial C)\). In that basis, the radial map contributes the factors \((1-d^{*}\kappa_{i}^{C})\), while the reciprocal map contributes the factors \((1-d^{*}\kappa_{i}^{\Omega})^{-1}\). Hence the eigenvalues of \(DF(c^{*}) = D\pi(x^{*})D\Phi(c^{*})\) are
\[
\mu_{i} = \frac{1-d^{*}\kappa_{i}^{C}}{1-d^{*}\kappa_{i}^{\Omega}}.
\]
Since \(c^{*}\) is a local minimum, the Hessian eigenvalues \(h_{i}\) of \(\operatorname{Hess}_{\partial C}d(c^{*})\) are positive. Therefore the eigenvalues of
\[
A(c^{*}) = (I - DF(c^{*}))(\operatorname{Hess}_{\partial C}(d)(c^{*}))^{-1}
\]
are
\[
a_{i} = \frac{1-\mu_{i}}{h_{i}} = \frac{d^{*}(\kappa_{i}^{C}-\kappa_{i}^{\Omega})}{(1-d^{*}\kappa_{i}^{\Omega})h_{i}},
\]
which are positive under the stated assumptions. Symmetry follows from simultaneous diagonalization.
\end{proof}

\begin{remark}[Curvature interpretation]
\label{rem:curvature}
The condition \(\kappa_{i}^{\Omega}<\kappa_{i}^{C}\) expresses that, along each principal direction, the outer boundary \(\partial \Omega\) is less curved than the convex core \(\partial C\).

Under this condition, the round-trip mechanism induces a genuinely dissipative dynamics, and the return map behaves locally as a preconditioned gradient descent.
\end{remark}

\begin{remark}[Geometric meaning]
\label{rem:geometric_meaning}
The operator \(A(c^{*})\) arises from the differential of the return map and encodes the local geometry of the round-trip mechanism through \(\partial \Omega\).

The positivity assumption ensures that the induced dynamics acts as a locally dissipative, gradient-like iteration for the thickness function. In particular, it guarantees that the operator \(A(c^{*})\) preserves the sign structure of the Hessian of \(d\), so that minima correspond to attracting fixed points.

This condition can be interpreted as a non-degeneracy requirement on the interaction between the normal geometry of \(\partial C\) and the inward normal flow from \(\partial \Omega\).
\end{remark}

\begin{remark}[Gradient descent interpretation]
\label{rem:gradient}
The expansion \eqref{eq:first_expansion} shows that the return map behaves locally as a preconditioned gradient descent step for the thickness function \(d\) on \(\partial C\).

The operator \(A(c^{*})\) plays the role of a local metric or preconditioner, encoding anisotropic geometric effects induced by the round-trip construction defining \(F\).
\end{remark}

\begin{theorem}[Local quantitative descent near a nondegenerate minimum]
\label{thm:quant_descent}
Let \(c^{*}\) be a nondegenerate local minimum of \(d\), and assume that Assumption \ref{ass:elliptic} holds at \(c^{*}\). Then there exists a neighborhood \(U\) of \(c^{*}\) and constants \(\alpha,\beta,\gamma>0\) such that for all \(c\in U\),
\[
\alpha\|c-c^{*}\|^{2} \leq d(c)-d(c^{*}) \leq \beta\|c-c^{*}\|^{2}, \tag{7.7}\label{eq:quadratic_bounds}
\]
\[
d(F(c)) \leq d(c) - \gamma\|\nabla_{\partial C}d(c)\|^{2}. \tag{7.8}\label{eq:descent_quant}
\]
In particular, \(c^{*}\) is a strict local attractor for the return dynamics.
\end{theorem}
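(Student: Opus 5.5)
We work in local coordinates on \(\partial C\) centered at \(c^{*}\). In these coordinates the induced metric differs from the Euclidean one by \(O(\|c-c^{*}\|)\), so all norms below are equivalent up to constants that can be absorbed. We write \(H=\operatorname{Hess}_{\partial C}(d)(c^{*})\), which is positive definite, with smallest and largest eigenvalues \(0<h_{\min}\le h_{\max}\), and we set \(A=A(c^{*})\).

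\textbf{Step 1: quadratic bounds (7.7).} Since \(d\in C^{2}\) and \(\nabla_{\partial C}d(c^{*})=0\), Taylor's formula gives
\[
d(c)-d(c^{*})=\tfrac12\langle H(c-c^{*}),c-c^{*}\rangle+o(\|c-c^{*}\|^{2}).
\]
Shrinking \(U\) so that the remainder is at most \(\tfrac14 h_{\min}\|c-c^{*}\|^{2}\) yields (7.7) with \(\alpha=h_{\min}/4\) and \(\beta=h_{\max}\). In the same way,
\[
\nabla_{\partial C}d(c)=H(c-c^{*})+o(\|c-c^{*}\|),
\]
so \(\|\nabla_{\partial C}d(c)\|\) is comparable to \(\|c-c^{*}\|\) on \(U\). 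This comparability is what allows errors of size \(o(\|c-c^{*}\|^{2})\) to be traded for errors of size \(o(\|\nabla d\|^{2})\).

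\textbf{Step 2: descent estimate (7.8).} Write \(g=\nabla_{\partial C}d(c)\). By Proposition \ref{prop:first_expansion}, \(F(c)-c=-Ag+R(c)\) with \(\|R(c)\|=o(\|c-c^{*}\|)=o(\|g\|)\); in particular \(F(c)\) stays close to \(c^{*}\). A second-order Taylor expansion of \(d\) at \(c\), using the \(C^{2}\) regularity and the continuity of the Hessian near \(c^{*}\), gives
\[
d(F(c))=d(c)-\langle g,Ag\rangle+\tfrac12\langle HAg,Ag\rangle+o(\|g\|^{2}).
\]
Rather than comparing the two quadratic terms directly, it is cleaner to use the exact linearization \(DF(c^{*})=I-AH\). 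Set \(y=c-c^{*}\). Then \(F(c)-c^{*}=(I-AH)y+o(\|y\|)\), and
\[
d(F(c))-d(c)=\tfrac12\bigl(\langle H(I-AH)y,(I-AH)y\rangle-\langle Hy,y\rangle\bigr)+o(\|y\|^{2}).
\]
Next substitute \(z=H^{1/2}y\) and \(M=H^{1/2}AH^{1/2}\), which is symmetric positive definite because \(A\) is. The difference of quadratic forms becomes
\[
\tfrac12\langle\bigl((I-M)^{2}-I\bigr)z,z\rangle=-\tfrac12\langle M(2I-M)z,z\rangle .
\]
This expression is negative definite exactly when the spectrum of \(M\) lies in \((0,2)\), that is, when \(\|I-M\|<1\), or equivalently when the spectral radius of \(DF(c^{*})\) is less than \(1\).

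\textbf{Main obstacle.} Positivity of \(A\) (Assumption \ref{ass:elliptic}) gives only \(\operatorname{spec}(M)\subset(0,\infty)\). The upper bound \(M<2I\) is not provided by ellipticity alone. I would try to recover it first from the global descent property (2.3), \(d(F(c))\le d(c)\). Plugging \(y=t e\), with \(e\) an eigenvector of \(M\), into the expansion above, dividing by \(t^{2}\), and letting \(t\to0\) gives \(\langle M(2I-M)z,z\rangle\ge0\), hence \(M\le 2I\). The borderline case \(M=2I\) would still have to be excluded; I would do this by a perturbation argument, and failing that I would impose the spectral condition \(\|I-M\|<1\) explicitly. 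This is the condition the abstract alludes to, and in the aligned setting of Proposition \ref{prop:ellipticity} it reads \(\mu_{i}>-1\).

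Granting \(M\le(2-\varepsilon)I\) and \(M\ge m I\) with \(m>0\), the decrease \(d(c)-d(F(c))\) is at least a constant multiple of \(\|z\|^{2}\), hence of \(\|y\|^{2}\), hence of \(\|g\|^{2}\), up to \(o(\|g\|^{2})\). Shrinking \(U\) to absorb the remainder gives (7.8) with some \(\gamma>0\).

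\textbf{Attraction.} Combining (7.7) and (7.8) with \(\|g\|\ge c_{0}\|c-c^{*}\|\) yields
\[
d(F(c))-d(c^{*})\le(1-\gamma c_{0}^{2}/\beta)\bigl(d(c)-d(c^{*})\bigr).
\]
Choose \(U\) to be a sublevel set \(\{d<d(c^{*})+\delta\}\) intersected with a small ball. This set is forward invariant, and the quantity \(d-d(c^{*})\) decays geometrically along orbits. By (7.7), \(\|F^{n}(c)-c^{*}\|\to0\) linearly, so \(c^{*}\) is a strict local attractor.
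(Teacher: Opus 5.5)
Your Step 2 is correct, and it is more careful than the paper's own proof. The paper writes $d(F(c))=d(c)+\langle\nabla_{\partial C}d(c),F(c)-c\rangle+O(\|F(c)-c\|^{2})$. It then absorbs the last term into $o(\|\nabla_{\partial C}d(c)\|^{2})$ and concludes (7.8) from positivity of $A(c^{*})$ alone. That absorption is invalid. Since $F(c)-c=-A(c^{*})g+o(\|g\|)$, the quadratic term is exactly your $\tfrac12\langle HAg,Ag\rangle$, which is of the same order as $\langle g,Ag\rangle$. The net decrease is therefore $\tfrac12\langle M(2I-M)z,z\rangle+o(\|z\|^{2})$, as you computed. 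Your use of (2.3) to force $M\le 2I$ is also valid.

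The genuine gap in your proposal is the step you leave open: excluding an eigenvalue $2$ of $M$, equivalently an eigenvalue $-1$ of $DF(c^{*})$. A perturbation argument cannot do this, because neither Assumption~\ref{ass:elliptic} nor the listed properties of $F$ rule it out. Take, in a local coordinate $x$ on $\partial C$, $d(x)=d^{*}+\tfrac12x^{2}$ and $F(x)=-x+x^{3}$ near $0$.
\begin{itemize}
\item Then $H=1$, $DF(0)=-1$ and $A(c^{*})=2>0$, so Assumption~\ref{ass:elliptic} holds.
\item Also $d(F(x))-d(x)=-x^{4}+\tfrac12x^{6}<0$ for small $x\neq0$, so (2.3), (2.4) and (9.2) hold locally, and orbits converge to $0$.
\item Yet $d(x)-d(F(x))=O(x^{4})$ while $\|\nabla_{\partial C}d(x)\|^{2}=x^{2}$, so no $\gamma>0$ works in (7.8).
\end{itemize}

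So what is missing is a hypothesis, not an idea. Given symmetry of $A(c^{*})$, (7.8) holds if and only if $\operatorname{spec}\bigl(A(c^{*})\operatorname{Hess}_{\partial C}(d)(c^{*})\bigr)\subset(0,2)$. The paper imposes this condition only later, in Corollary~\ref{cor:linear_conv}. Under the aligned hypotheses of Proposition~\ref{prop:ellipticity} it holds automatically, since there $0<\mu_{i}<1$, so your condition $\mu_{i}>-1$ is met. With that hypothesis added, your proof of (7.7), (7.8) and the geometric decay of $d-d(c^{*})$ is complete.

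The qualitative claim that $c^{*}$ is a strict local attractor survives without the extra hypothesis, though your derivation of it routes through (7.8). A small sublevel set around $c^{*}$ is forward invariant by (2.3) and continuity of $F$. An $\omega$-limit (LaSalle) argument using (9.2) and isolation of $c^{*}$ then gives convergence, only without a linear rate; in the example above, $|F(x)|=|x|(1-x^{2})$.
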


\begin{proof}
Since \(c^{*}\) is a nondegenerate local minimum, the Hessian \(H^{*}:=\operatorname{Hess}_{\partial C}(d)(c^{*})\) is positive definite. By Taylor expansion on the manifold \(\partial C\), there exist a neighborhood \(U\) of \(c^{*}\) and constants \(\alpha,\beta>0\) such that \eqref{eq:quadratic_bounds} holds.

By \eqref{eq:first_expansion},
\[
F(c)-c = -A(c^{*})\nabla_{\partial C}d(c) + R(c),\qquad \|R(c)\| = o(\|c-c^{*}\|).
\]
Expanding \(d(F(c))\) around \(c\) gives
\[
d(F(c)) = d(c) + \langle \nabla_{\partial C}d(c), F(c)-c\rangle + O(\|F(c)-c\|^{2}).
\]
Substituting the first-order expansion yields
\[
d(F(c)) = d(c) - \langle \nabla_{\partial C}d(c), A(c^{*})\nabla_{\partial C}d(c)\rangle + o(\|\nabla_{\partial C}d(c)\|^{2}).
\]
Since \(A(c^{*})\) is symmetric positive definite, there exists \(m_{A}>0\) such that
\[
\langle v, A(c^{*})v\rangle \geq m_{A}\|v\|^{2}\qquad \forall v\in T_{c^{*}}(\partial C).
\]
Shrinking \(U\) if necessary, the remainder can be absorbed, giving \eqref{eq:descent_quant} for some \(\gamma>0\). The strict decrease and the quadratic trapping imply local attraction.
\end{proof}

\begin{corollary}[Linear convergence under a spectral gap]
\label{cor:linear_conv}
Under the assumptions of Theorem \ref{thm:quant_descent}, assume in addition that all eigenvalues of \(A(c^{*})\operatorname{Hess}_{\partial C}(d)(c^{*})\) lie in \((0,2)\). Then there exist a neighborhood \(U\) of \(c^{*}\) and a constant \(q\in(0,1)\) such that
\[
\|F(c)-c^{*}\| \leq q\|c-c^{*}\|\qquad \text{for all } c\in U. \tag{7.9}\label{eq:linear_contraction}
\]
Consequently, every orbit starting in \(U\) converges to \(c^{*}\) with linear rate:
\[
\|F^{n}(c)-c^{*}\| \leq q^{n}\|c-c^{*}\|.
\]
\end{corollary}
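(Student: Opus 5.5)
The plan is to reduce \eqref{eq:linear_contraction} to a bound on the operator norm of the linearization $L:=DF(c^{*})$, and then absorb the $o(\|c-c^{*}\|)$ remainder. By Proposition~\ref{prop:diff_F}, $F$ is $C^{1}$ near $c^{*}$, and $F(c^{*})=c^{*}$ by \eqref{eq:fix_crit_id}. Hence, in the local coordinates centred at $c^{*}$ used in Proposition~\ref{prop:first_expansion},
\[
F(c)-c^{*}=L(c-c^{*})+r(c),\qquad \|r(c)\|=o(\|c-c^{*}\|).
\]
By \eqref{eq:A_def}, $L=I-A(c^{*})H^{*}$ with $H^{*}=\operatorname{Hess}_{\partial C}(d)(c^{*})$. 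If I can show $\|L\|_{\mathrm{op}}=:q_{0}<1$ in the norm appearing in \eqref{eq:linear_contraction}, the rest is routine. I fix $q\in(q_{0},1)$ and shrink $U$ so that $\|r(c)\|\le (q-q_{0})\|c-c^{*}\|$ on $U$. This gives $\|F(c)-c^{*}\|\le q\|c-c^{*}\|$ on $U$. Taking $U$ to be a ball around $c^{*}$ makes it forward invariant, and induction then gives the bound $q^{n}\|c-c^{*}\|$.

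The spectral input is as follows. Assumption~\ref{ass:elliptic} and the positive definiteness of $H^{*}$ (since $c^{*}$ is a nondegenerate minimum) imply that $AH^{*}$ is similar to the symmetric matrix $A^{1/2}H^{*}A^{1/2}$. So $AH^{*}$ is diagonalizable with real positive eigenvalues $\lambda_{i}$, which by hypothesis lie in $(0,2)$. The eigenvalues of $L$ are then $1-\lambda_{i}\in(-1,1)$, and $\rho(L)=\max_{i}|1-\lambda_{i}|<1$.

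The main obstacle is passing from $\rho(L)<1$ to $\|L\|_{\mathrm{op}}<1$ in the stated norm, because $L$ need not be self-adjoint for the Riemannian metric.

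\begin{itemize}
\item \textbf{First step.} I would observe that $L$ is self-adjoint whenever $A(c^{*})$ and $H^{*}$ commute. This is exactly the aligned situation of Proposition~\ref{prop:ellipticity}, where both operators are diagonal in the common principal frame. In that case $\|L\|_{\mathrm{op}}=\rho(L)<1$ directly.
\item \textbf{General case.} I would use the fact that Proposition~\ref{prop:first_expansion} and the statement are formulated in local coordinates centred at $c^{*}$, which are ours to choose. I would compose a normal chart with the linear map $A(c^{*})^{-1/2}$ on $T_{c^{*}}(\partial C)$. In the new coordinates, the linearization becomes $A^{-1/2}LA^{1/2}=I-A^{1/2}H^{*}A^{1/2}$, which is symmetric, so its Euclidean operator norm equals $\rho(L)<1$. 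The remainder estimate is unaffected, since a fixed linear change of chart preserves $o(\|c-c^{*}\|)$.
\end{itemize}

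Thus I would prove \eqref{eq:linear_contraction} with $\|\cdot\|$ the Euclidean norm in this adapted chart. I would state explicitly that this is the reading of "local coordinates" under which the one-step bound holds with the stated constant $q$.

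For the intrinsic Riemannian norm without commutation, I expect only a bound $\|F^{n}(c)-c^{*}\|\le K q^{n}\|c-c^{*}\|$ with $K$ depending on the condition number of $A(c^{*})$, so the one-step version cannot be obtained this way. I would flag this in a remark rather than claim it.
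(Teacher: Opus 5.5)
Your proof is correct, and its skeleton is the paper's: linearize at the fixed point, use $DF(c^{*})=I-A(c^{*})H^{*}$ from \eqref{eq:A_def}, and absorb the $o(\|c-c^{*}\|)$ remainder on a small neighborhood.

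Where you differ is in the one step the paper leaves implicit. Its proof goes directly from ``all eigenvalues of $DF(c^{*})$ have modulus $<1$'' to ``$F$ is a contraction on $U$'', which holds only in a norm adapted to $DF(c^{*})$. Your conjugation by $A(c^{*})^{-1/2}$ supplies that norm explicitly. It is worth recognizing it as the norm of the metric $g_{c^{*}}(u,v)=\langle A(c^{*})^{-1}u,v\rangle$ of Proposition~\ref{prop:riem_grad}. Since $g_{c^{*}}(Lu,v)=g_{c^{*}}(u,v)-\langle H^{*}u,v\rangle$, the operator $L$ is $g_{c^{*}}$-self-adjoint, so its $g_{c^{*}}$-operator norm equals $\rho(L)<1$. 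This also makes your separate commuting case redundant. Your similarity argument buys one more thing: it shows that the eigenvalues of $A(c^{*})H^{*}$ are real, which the statement tacitly presumes.

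Your caveat about the intrinsic norm is not mere caution. In an orthonormal frame, take $A(c^{*})=\operatorname{diag}(10,\tfrac{1}{10})$ and $H^{*}=\begin{pmatrix}\tfrac{1}{10}&\tfrac{9}{10}\\ \tfrac{9}{10}&10\end{pmatrix}$, both symmetric positive definite. Then $A(c^{*})H^{*}$ has eigenvalues $0.1$ and $1.9$, yet $L=\begin{pmatrix}0&-9\\ -\tfrac{9}{100}&0\end{pmatrix}$ has $\rho(L)=0.9$ and $\|L\|=9$. So, at the level of the corollary's hypotheses, \eqref{eq:linear_contraction} cannot hold in the Riemannian norm for any $q<1$ and any $U$. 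Only the bound $\|F^{n}(c)-c^{*}\|\le K q^{n}\|c-c^{*}\|$ with $K=\|A(c^{*})^{1/2}\|\,\|A(c^{*})^{-1/2}\|$ survives there.

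Your version, with $\|\cdot\|$ taken to be the $g_{c^{*}}$-norm, is therefore the correct reading of the corollary: it is the paper's argument made rigorous rather than a different argument.
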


\begin{proof}
By Corollary \ref{cor:lin_deriv},
\[
DF(c^{*}) = I - A(c^{*})\operatorname{Hess}_{\partial C}(d)(c^{*}).
\]
The spectral assumption implies that all eigenvalues of \(DF(c^{*})\) have modulus strictly less than 1. Hence, after shrinking to a sufficiently small neighborhood \(U\), the map \(F\) is a contraction on \(U\), which gives \eqref{eq:linear_contraction}.
\end{proof}

\begin{remark}[Optimization viewpoint]
\label{rem:optimization}
The estimate \eqref{eq:descent_quant} shows that, near a nondegenerate minimum, the return map is not merely gradient-like in a qualitative sense: it satisfies a genuine descent inequality analogous to that of a preconditioned optimization method \cite{absil2008}. In this sense, the round-trip through \(\partial \Omega\) induces an intrinsic geometric preconditioner on \(\partial C\).
\end{remark}

\subsection{Linearization at equilibrium}
\label{subsec:linearization}

\begin{corollary}[Linearization of the return map]
\label{cor:lin_deriv}
Under the assumptions of Proposition \ref{prop:first_expansion}, the derivative of the return map at \(c^{*}\) satisfies
\[
DF(c^{*}) = I - A(c^{*})\operatorname{Hess}_{\partial C}(d)(c^{*}). \tag{7.10}\label{eq:lin_deriv}
\]
\end{corollary}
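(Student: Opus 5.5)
The identity \eqref{eq:lin_deriv} follows essentially from the definition \eqref{eq:A_def} of \(A(c^{*})\) in Proposition \ref{prop:first_expansion}. The plan is to check that this definition is legitimate and then rearrange it, with a short consistency check against the expansion \eqref{eq:first_expansion}.

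\emph{Well-definedness.} Since \(c^{*}\) is a nondegenerate critical point, \(H^{*}:=\operatorname{Hess}_{\partial C}(d)(c^{*})\) is an invertible endomorphism of \(T_{c^{*}}(\partial C)\). Since \(c^{*}\in\operatorname{Fix}(F)\) by \eqref{eq:fix_crit_id}, \(DF(c^{*})\) also maps \(T_{c^{*}}(\partial C)\) to itself. Hence \(A(c^{*})=(I-DF(c^{*}))(H^{*})^{-1}\) is a well-defined linear operator on \(T_{c^{*}}(\partial C)\).

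\emph{Algebraic rearrangement.} Multiplying \eqref{eq:A_def} on the right by \(H^{*}\) gives
\[
A(c^{*})H^{*}=I-DF(c^{*}),
\]
which is exactly \eqref{eq:lin_deriv}.

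\emph{Consistency with the expansion.} To confirm that the \(A(c^{*})\) in \eqref{eq:first_expansion} is the same operator, differentiate \eqref{eq:first_expansion} at \(c^{*}\) in local coordinates. Proposition \ref{prop:diff_F} gives \(F\in C^{1}\). The assumption \(d\in C^{2}\) gives the Taylor expansion
\[
\nabla_{\partial C}d(c)=H^{*}(c-c^{*})+o(\|c-c^{*}\|),
\]
using \(\nabla_{\partial C}d(c^{*})=0\). We then need \(R(c^{*})=0\) and \(DR(c^{*})=0\).
\begin{itemize}
\item \(R(c^{*})=0\) follows from \(F(c^{*})=c^{*}\) and \(\nabla_{\partial C}d(c^{*})=0\).
\item Since \(R(c)=F(c)-c+A(c^{*})\nabla_{\partial C}d(c)\) is differentiable at \(c^{*}\), the bound \eqref{eq:remainder} forces its derivative there to vanish, so \(DR(c^{*})=0\).
\end{itemize}
Differentiating then yields \(DF(c^{*})=I-A(c^{*})H^{*}\) directly.

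\emph{Main obstacle.} The only delicate point is intrinsic rather than computational. The expansion is written in local coordinates, so expressions like \(c-c^{*}\) and the sum \(c-A\nabla d\) must be interpreted in a chart, or via the exponential map. I will fix normal coordinates centered at \(c^{*}\). In these coordinates the coordinate Hessian of \(d\) at \(c^{*}\) coincides with the Riemannian Hessian, because the Christoffel symbols vanish at \(c^{*}\). Also, \(DF(c^{*})\) is chart-independent at a fixed point. Together these ensure the identity holds intrinsically on \(T_{c^{*}}(\partial C)\).
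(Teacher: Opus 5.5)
Your proposal is correct and follows the paper's proof, which likewise obtains \eqref{eq:lin_deriv} by multiplying the definition \eqref{eq:A_def} of \(A(c^{*})\) on the right by \(\operatorname{Hess}_{\partial C}(d)(c^{*})\). Your extra check, differentiating \eqref{eq:first_expansion} at \(c^{*}\) using \(R(c^{*})=0\) and \(DR(c^{*})=0\), is also sound: it shows that \(A(c^{*})\) is already uniquely determined by the expansion \eqref{eq:first_expansion}--\eqref{eq:remainder}.
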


\begin{proof}
This follows directly from \eqref{eq:A_def}.
\end{proof}

\begin{remark}[Relation to the scalar leading-order model]
\label{rem:scalar_model}
In the earlier analysis of the return map \cite{barkatou_elmorsalani_return}, the dynamics near equilibrium was described at leading order by a scalar multiple of the Hessian of the thickness function, corresponding formally to
\[
DF(c^{*})\approx I - 2d(c^{*})\operatorname{Hess}_{\partial C}(d)(c^{*}).
\]
This corresponds to an isotropic gradient-like model. The present formulation refines this picture by replacing the scalar factor \(2d(c^{*})\) with the operator \(A(c^{*})\), leading to an anisotropic, operator-valued description of the local dynamics.
\end{remark}

\begin{proposition}[Preconditioned Riemannian descent form]
\label{prop:riem_grad}
Let \(c^{*}\) be a nondegenerate critical point of \(d\) and assume that \(A(c^{*})\) is symmetric positive definite. Then, in local coordinates centered at \(c^{*}\), there exists a symmetric positive definite bilinear form
\[
g_{c^{*}}(u,v):=\langle A(c^{*})^{-1}u,v\rangle
\]
on \(T_{c^{*}}(\partial C)\) such that the first-order expansion \eqref{eq:first_expansion} can be written as
\[
F(c) = c - \operatorname{grad}_{g_{c^{*}}}d(c) + R(c),\qquad \|R(c)\| = o(\|c-c^{*}\|). \tag{7.11}\label{eq:riem_grad}
\]
\end{proposition}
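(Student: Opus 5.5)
The plan is to combine the first-order expansion \eqref{eq:first_expansion} of Proposition \ref{prop:first_expansion} with the elementary fact that changing the inner product on a vector space changes the gradient by the inverse of the representing operator. I work in local coordinates centered at \(c^{*}\) and identify a neighborhood of \(c^{*}\) in \(\partial C\) with a neighborhood of \(0\) in \(T_{c^{*}}(\partial C)\), as in the proof of Proposition \ref{prop:first_expansion}. In these coordinates \(\langle\cdot,\cdot\rangle\) is the induced Riemannian metric at \(c^{*}\), and \(\nabla_{\partial C}d(c)\) is the gradient with respect to the metric at the point \(c\).

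First, I check that \(g_{c^{*}}\) is a well-defined inner product. By assumption \(A:=A(c^{*})\) is symmetric positive definite, so \(A^{-1}\) exists and is symmetric positive definite. Symmetry of \(g_{c^{*}}\) follows from \(\langle A^{-1}u,v\rangle=\langle u,A^{-1}v\rangle\), and positivity from \(\langle A^{-1}u,u\rangle\ge \|A\|^{-1}\|u\|^{2}>0\) for \(u\neq0\).

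Next, I define \(\operatorname{grad}_{g_{c^{*}}}d(c)\) as the unique vector \(w\) with \(g_{c^{*}}(w,v)=Dd(c)[v]\) for all \(v\in T_{c^{*}}(\partial C)\). Here the constant form \(g_{c^{*}}\) is used on the whole coordinate patch. Write \(G(c)\) for the coordinate metric at \(c\), so that \(G(0)\) is the reference metric. Then \(Dd(c)[v]=\langle G(c)\nabla_{\partial C}d(c),v\rangle\), and the defining relation \(\langle A^{-1}w,v\rangle=\langle G(c)\nabla_{\partial C}d(c),v\rangle\) gives \(w=A\,G(c)\nabla_{\partial C}d(c)\).

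The only subtlety is that the gradients live at different points. Since \(G\) is continuous with \(G(0)=I\), and \(\nabla_{\partial C}d(c)=O(\|c-c^{*}\|)\) because \(d\in C^{2}\) and \(\nabla_{\partial C}d(c^{*})=0\), we get
\[
\operatorname{grad}_{g_{c^{*}}}d(c)-A\nabla_{\partial C}d(c)=A\,(G(c)-I)\,\nabla_{\partial C}d(c)=o(\|c-c^{*}\|).
\]
If one prefers, one can define \(\operatorname{grad}_{g_{c^{*}}}d(c):=A\nabla_{\partial C}d(c)\) directly. That is the exact identity when the base metric is frozen at \(c^{*}\), and then this step disappears.

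Finally, I substitute into \eqref{eq:first_expansion}:
\[
F(c)=c-\operatorname{grad}_{g_{c^{*}}}d(c)+\bigl[R(c)+\operatorname{grad}_{g_{c^{*}}}d(c)-A\nabla_{\partial C}d(c)\bigr].
\]
The bracket is \(o(\|c-c^{*}\|)\) by \eqref{eq:remainder} and the previous estimate, which gives \eqref{eq:riem_grad}. I expect the main obstacle to be only this bookkeeping of metrics in local coordinates, that is, making precise which gradient is meant. The estimate \(\nabla_{\partial C}d(c)=O(\|c-c^{*}\|)\) is what lets all metric discrepancies be absorbed into the remainder.
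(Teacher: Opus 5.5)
Your proposal is correct and follows the paper's own route: define \(g_{c^{*}}\) through \(A(c^{*})^{-1}\), identify \(\operatorname{grad}_{g_{c^{*}}}d(c)\) with \(A(c^{*})\nabla_{\partial C}d(c)\), and substitute into \eqref{eq:first_expansion}. Your explicit estimate \(A(c^{*})(G(c)-I)\nabla_{\partial C}d(c)=o(\|c-c^{*}\|)\) makes rigorous what the paper compresses into the phrase ``at first order in local coordinates.''
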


\begin{proof}
Since \(A(c^{*})\) is symmetric positive definite, it defines an inner product
\[
g_{c^{*}}(u,v) := \langle A(c^{*})^{-1}u,v\rangle.
\]
By definition of the Riemannian gradient with respect to this metric, \(\operatorname{grad}_{g_{c^{*}}}d(c) = A(c^{*})\nabla_{\partial C}d(c)\) at first order in local coordinates. Substituting into \eqref{eq:first_expansion} gives \eqref{eq:riem_grad}.
\end{proof}

\begin{remark}[Intrinsic metric induced by the round-trip]
\label{rem:intrinsic_metric}
The previous proposition shows that the return dynamics selects, at each nondegenerate equilibrium, an effective local metric in which the iteration is a genuine gradient step. Thus the anisotropy of the dynamics is not an artifact of coordinates but reflects an intrinsic geometric preconditioning induced by the round-trip through \(\partial \Omega\).
\end{remark}

\subsection{Dynamical classification of equilibria}
\label{subsec:classification}

\begin{proposition}[Linearized stability and Morse index]
\label{prop:stability}
Let \(c^{*}\) be a nondegenerate critical point of \(d\). Then the Jacobian of the return map at \(c^{*}\) is given by \eqref{eq:lin_deriv}, and the local stability is determined by the spectrum of
\[
I - A(c^{*})\operatorname{Hess}_{\partial C}(d)(c^{*}). \tag{7.12}\label{eq:stability_spectrum}
\]
\end{proposition}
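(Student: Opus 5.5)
The argument has two steps. First we identify the Jacobian, and then we invoke the standard linearization principle for $C^1$ maps at a fixed point.

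\textbf{Step 1: the Jacobian.} Since $c^{*}$ is a nondegenerate critical point of $d$, Proposition \ref{prop:first_expansion} applies. The operator $A(c^{*})$ is well defined by \eqref{eq:A_def}, because $\operatorname{Hess}_{\partial C}(d)(c^{*})$ is invertible. Corollary \ref{cor:lin_deriv} then gives
\[
DF(c^{*}) = I - A(c^{*})\operatorname{Hess}_{\partial C}(d)(c^{*}).
\]
This identity is purely algebraic: multiplying \eqref{eq:A_def} on the right by the Hessian recovers $I-DF(c^{*})$. The first claim is therefore immediate, and I would simply record it.

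\textbf{Step 2: stability from the spectrum.} By Proposition \ref{prop:diff_F}, $F$ is $C^{1}$ near $c^{*}$, and $F(c^{*})=c^{*}$ by \eqref{eq:fix_crit_id}. I would work in a local chart centered at $c^{*}$. Writing $L = DF(c^{*})$, we have
\[
F(c)-c^{*} = L(c-c^{*}) + r(c), \qquad \|r(c)\| = o(\|c-c^{*}\|).
\]
I would then split into three cases according to the spectral radius $\rho(L)$.

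\emph{Case $\rho(L)<1$.} Choose an adapted norm $\|\cdot\|_{*}$ with $\|L\|_{*}\le \rho(L)+\varepsilon<1$; this exists by the standard Jordan-form or Gelfand argument. Shrinking the neighbourhood until $\|r(c)\|_{*}\le \varepsilon\|c-c^{*}\|_{*}$, the map $F$ becomes a strict contraction toward $c^{*}$ in this norm. This is the same argument as in Corollary \ref{cor:linear_conv}, so $c^{*}$ is locally asymptotically stable.

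\emph{Case with an eigenvalue of modulus $>1$.} Here I would invoke the discrete-time linearized instability theorem, or equivalently the unstable-manifold theorem for $C^{1}$ maps at a hyperbolic or partially hyperbolic fixed point. Points arbitrarily close to $c^{*}$ then leave a fixed neighbourhood, so $c^{*}$ is unstable.

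\emph{Case of eigenvalues on the unit circle.} Linearization is inconclusive here. I would state that in this case the spectrum alone does not decide, so the statement "determined by the spectrum" is to be read in the hyperbolic sense.

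\textbf{Step 3: the link to the Morse index.} Under Assumption \ref{ass:elliptic}, $A(c^{*})\operatorname{Hess}_{\partial C}(d)(c^{*})$ is similar to the symmetric matrix
\[
A^{1/2}\,\operatorname{Hess}_{\partial C}(d)(c^{*})\,A^{1/2}.
\]
Its eigenvalues are therefore real. By Sylvester's law of inertia, the number of negative ones equals $\operatorname{ind}(c^{*})$. Each negative eigenvalue $\lambda$ yields an eigenvalue $1-\lambda>1$ of $DF(c^{*})$. Hence every critical point with $\operatorname{ind}(c^{*})\ge 1$ is linearly unstable. This ties stability to the Morse index, and I would include it as a remark.

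\textbf{Main obstacle.} The delicate point is that $F$ is only $C^{1}$, so I must make sure the instability theorem used needs no more regularity. The contraction argument and the basic instability result via an adapted norm on the dominant invariant subspace work for $C^{1}$ maps. I would cite the standard $C^{1}$ versions, for instance from \cite{shub1987}, rather than a Hartman--Grobman statement.
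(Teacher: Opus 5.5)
Your proposal is correct and follows the route the paper implicitly intends. The paper gives no written proof of this proposition; it relies only on Corollary~\ref{cor:lin_deriv} (which, as you observe, is a tautology from \eqref{eq:A_def}) and the standard linearization principle for fixed points of $C^{1}$ maps.

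Your additions are sound and sharpen the statement:
\begin{itemize}
\item The hyperbolicity caveat is a genuine qualification of ``determined by the spectrum'', since the literal wording fails when $DF(c^{*})$ has eigenvalues on the unit circle.
\item The adapted norm is necessary rather than cosmetic, because $DF(c^{*})$ is generally non-normal in the original metric. Under Assumption~\ref{ass:elliptic} the natural choice is $g_{c^{*}}(u,v)=\langle A(c^{*})^{-1}u,v\rangle$: in this inner product $DF(c^{*})$ is self-adjoint, so its norm equals its spectral radius.
\item Your Sylvester-inertia argument supplies the justification the paper omits for the index-based claims of Corollary~\ref{cor:stability_geo}.
\end{itemize}
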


\begin{corollary}[Geometric interpretation]
\label{cor:stability_geo}
Assume in addition that \(A(c^{*})\) is positive definite.

If \(c^{*}\) is a local maximum of \(d\), then it is a repelling fixed point of \(F\). If \(c^{*}\) is a local minimum of \(d\) and the eigenvalues of \(A(c^{*})\operatorname{Hess}_{\partial C}(d)(c^{*})\) lie in \((0,2)\), then \(c^{*}\) is an attracting fixed point. If \(c^{*}\) has intermediate Morse index, then it is a saddle point.
\end{corollary}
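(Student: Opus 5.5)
The plan is to reduce everything to the spectrum of the linearization $DF(c^{*}) = I - A(c^{*})H^{*}$, where $H^{*}:=\operatorname{Hess}_{\partial C}(d)(c^{*})$. This formula is given by Corollary \ref{cor:lin_deriv} and Proposition \ref{prop:stability}. The key algebraic step is to show that the eigenvalues of $A(c^{*})H^{*}$ are real, nonzero, and that exactly $\operatorname{ind}(c^{*})$ of them are negative.

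For the key step I use that $A:=A(c^{*})$ is symmetric positive definite, which is needed to form $A^{1/2}$, so I take the symmetry part of Assumption \ref{ass:elliptic} as implicit in ``positive definite''. Then
\[
AH^{*} = A^{1/2}\bigl(A^{1/2}H^{*}A^{1/2}\bigr)A^{-1/2},
\]
so $AH^{*}$ is similar to the symmetric operator $A^{1/2}H^{*}A^{1/2}$. This operator is congruent to $H^{*}$. By Sylvester's law of inertia it has the same inertia as $H^{*}$: no zero eigenvalues, since $c^{*}$ is nondegenerate, and exactly $\operatorname{ind}(c^{*})$ negative eigenvalues. Hence $AH^{*}$ is diagonalizable with real eigenvalues $\lambda_1,\dots,\lambda_{N-1}$. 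Moreover, $DF(c^{*})$ has eigenvalues $1-\lambda_i$ with the same eigenvectors.

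The three cases then read off from this spectrum.
\begin{enumerate}
\item \emph{Local maximum.} Here $\operatorname{ind}(c^{*})=N-1$, so every $\lambda_i<0$ and every $1-\lambda_i>1$. Thus $DF(c^{*})$ is invertible and $F$ is a local $C^{1}$ diffeomorphism near $c^{*}$ (Proposition \ref{prop:diff_F} and the inverse function theorem). Its inverse has derivative with spectral radius $<1$, so by the same adapted-norm argument as in Corollary \ref{cor:linear_conv}, applied to $F^{-1}$, the point $c^{*}$ is repelling: $\|F(c)-c^{*}\|\ge q^{-1}\|c-c^{*}\|$ near $c^{*}$ for some $q<1$.
\item \emph{Local minimum.} The hypothesis $\lambda_i\in(0,2)$ gives $|1-\lambda_i|<1$, so this is exactly Corollary \ref{cor:linear_conv}, yielding an attracting fixed point.
\item \emph{Intermediate index $0<k<N-1$.} The $k$ negative $\lambda_i$ give an unstable subspace $E^{u}$ of dimension $k$, on which the eigenvalues exceed $1$. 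The $N-1-k$ positive $\lambda_i$ give the complementary invariant subspace $E^{s}$.
\end{enumerate}

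The main obstacle is the intermediate case. To call $c^{*}$ a genuine hyperbolic saddle I need $|1-\lambda_i|<1$ on $E^{s}$, i.e.\ $\lambda_i\in(0,2)$ for the positive eigenvalues, which the statement does not impose explicitly. I would first prove the unconditional part: $E^{u}\neq 0$, so $c^{*}$ is not attracting. I would then also show that $c^{*}$ is not repelling in the weaker sense that contraction occurs along $E^{s}$, under the same spectral restriction $\lambda_i\in(0,2)$ on the positive part as in the minimum case. Under that restriction $DF(c^{*})$ is hyperbolic with splitting $E^{s}\oplus E^{u}$, of dimensions $N-1-k$ and $k$. The $C^{1}$ stable manifold theorem for hyperbolic fixed points of $C^{1}$ local diffeomorphisms then gives local stable and unstable manifolds, i.e.\ a saddle with unstable dimension equal to the Morse index. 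I would state this spectral caveat explicitly as the precise sense in which ``saddle'' is meant, noting that if some $\lambda_i\ge 2$ the point still has at least $k$ expanding directions but may fail to be hyperbolic.
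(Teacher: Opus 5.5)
Your proof is correct and is the rigorous version of what the paper leaves implicit: the corollary is stated without proof, with the three cases read off the spectrum of $I-A(c^{*})\operatorname{Hess}_{\partial C}(d)(c^{*})$ in Proposition~\ref{prop:stability}, relying on the unproved claim in Remark~\ref{rem:geometric_meaning} that a positive definite $A(c^{*})$ ``preserves the sign structure'' of the Hessian, and your similarity $AH^{*}\sim A^{1/2}H^{*}A^{1/2}$ together with Sylvester's law of inertia is exactly what establishes that claim. Your caveat in the intermediate case is a genuine defect of the statement, not excess caution: under its stated local hypotheses, $A=\mathrm{diag}(1,3)$ and $H^{*}=\mathrm{diag}(-1,1)$ give $DF(c^{*})=\mathrm{diag}(2,-2)$, a repelling point rather than a saddle; however, the paper's standing descent property \eqref{eq:descent} rules this example out, because at second order it forces $H^{*}-DF(c^{*})^{T}H^{*}DF(c^{*})=P^{T}\bigl(2I-A^{1/2}H^{*}A^{1/2}\bigr)P\succeq 0$ with $P=A^{1/2}H^{*}$ invertible, i.e.\ every $\lambda_i\le 2$, so invoking it reduces your extra hypothesis to excluding the borderline value $\lambda_i=2$.
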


\begin{proposition}[Curvature-gap criterion for contraction]
\label{prop:curvature_gap}
Assume the hypotheses of Proposition \ref{prop:ellipticity}. Then the eigenvalues of the linearized return map are
\[
\mu_{i} = \frac{1-d^{*}\kappa_{i}^{C}}{1-d^{*}\kappa_{i}^{\Omega}},
\]
and the following hold:
\begin{enumerate}
\item if \(\kappa_{i}^{\Omega}<\kappa_{i}^{C}\) for all \(i\), then \(0<\mu_{i}<1\) for all \(i\), so the linearized dynamics is strictly contracting in every principal direction;
\item if \(\kappa_{i}^{\Omega}=\kappa_{i}^{C}\) for some \(i\), then \(\mu_{i}=1\) in that direction, and the first-order dissipation is lost along that mode;
\item if \(\kappa_{i}^{\Omega}>\kappa_{i}^{C}\) for some \(i\), then \(\mu_{i}>1\) in that direction, and the corresponding mode is linearly expanding.
\end{enumerate}
\end{proposition}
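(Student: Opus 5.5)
The proof reuses the diagonalization from the formal computation in the proof of Proposition \ref{prop:ellipticity}; after that, each of the three items is an elementary sign analysis of a scalar fraction.

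\textbf{Step 1: diagonalize \(DF(c^{*})\).} Under hypotheses 2 and 3 of Proposition \ref{prop:ellipticity}, \(D\Phi(c^{*})\) and \(D\pi(x^{*})\) are simultaneously diagonalized by an orthonormal basis \(\{e_i\}\) of principal directions of \(\partial C\) at \(c^{*}\).

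\emph{Radial factor.} Since \(\nabla_{\partial C}d(c^{*})=0\), formula \eqref{eq:grad_phi_shape} of Lemma \ref{lem:grad_phi} reduces to \(D\Phi(c^{*})=I-d^{*}S_C(c^{*})\). This acts on \(e_i\) by the factor \(1-d^{*}\kappa_i^{C}\).

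\emph{Reciprocal factor.} By the alignment assumption, \(D\pi(x^{*})\) acts on the transported direction by \((1-d^{*}\kappa_i^{\Omega})^{-1}\). This is well defined by the nonfocality condition.

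Composing via \(DF(c^{*})=D\pi(x^{*})D\Phi(c^{*})\) (Remark \ref{rem:diff_F}) gives \(DF(c^{*})e_i=\mu_i e_i\) with
\[
\mu_i=\frac{1-d^{*}\kappa_i^{C}}{1-d^{*}\kappa_i^{\Omega}} .
\]

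\textbf{Step 2: rewrite the gap.} Hypothesis 2 makes both the numerator and the denominator positive, so \(\mu_i>0\) in every case. The identity
\[
1-\mu_i=\frac{d^{*}(\kappa_i^{C}-\kappa_i^{\Omega})}{1-d^{*}\kappa_i^{\Omega}}
\]
then shows that the sign of \(1-\mu_i\) equals the sign of \(\kappa_i^{C}-\kappa_i^{\Omega}\), because \(d^{*}>0\) and the denominator is positive.

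\textbf{Step 3: read off the three regimes.}
\begin{itemize}
\item If \(\kappa_i^{\Omega}<\kappa_i^{C}\), then \(0<\mu_i<1\). If this holds for all \(i\), then \(DF(c^{*})\) has spectral radius \(<1\), and the linearized dynamics contracts along each \(e_i\).
\item If \(\kappa_i^{\Omega}=\kappa_i^{C}\), then \(\mu_i=1\). By Corollary \ref{cor:lin_deriv}, \((A(c^{*})H^{*})e_i=0\), so the first-order dissipation vanishes along that mode.
\item If \(\kappa_i^{\Omega}>\kappa_i^{C}\), then \(\mu_i>1\), and the mode \(e_i\) is linearly expanding.
\end{itemize}

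\textbf{Main obstacle.} The delicate step is justifying that \(D\pi(x^{*})\) really has eigenvalues \((1-d^{*}\kappa_i^{\Omega})^{-1}\) along the aligned directions. This requires differentiating the inward normal ray \(x\mapsto x+t(x)n(x)\) and computing its intersection with \(\partial C\). To leading order it yields \(\bigl(I-d^{*}S_\Omega\bigr)^{-1}\) after identifying tangent spaces through the round trip; the signs depend on orienting \(S_\Omega\) consistently with the inward normal. I would accept this as given, exactly as the formal computation in Proposition \ref{prop:ellipticity} and the reference \cite{barkatou_elmorsalani_return} do.

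A secondary point concerns item 2 and the hypotheses. Proposition \ref{prop:ellipticity} assumes \(\kappa_i^{\Omega}<\kappa_i^{C}\) for all \(i\), which would rule out items 2 and 3. So in the proof I would invoke only hypotheses 1–3 of Proposition \ref{prop:ellipticity} (regularity, nonfocality, alignment), which are all the eigenvalue formula needs.
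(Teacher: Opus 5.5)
Your outline is the paper's own: you import $\mu_i$ from the formal computation in the proof of Proposition~\ref{prop:ellipticity} and then do a sign analysis. The sign analysis, your identity for $1-\mu_i$, and your remark that hypothesis~4 would make items~2 and~3 vacuous are all fine. The gap is the step you isolate yourself, the reciprocal factor, and the computation you sketch for it does not give what you say.

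Differentiate $\pi(x)=x+t(x)n(x)$ at $x^*$ along $w\in T_{x^*}\partial\Omega$. Because $\nabla_{\partial C}d(c^*)=0$ and $I-d^*S_C$ is invertible, $T_{x^*}\partial\Omega=T_{c^*}\partial C=\nu(c^*)^{\perp}$ and $n(x^*)=-\nu(c^*)$. Since $Dn(x^*)w\perp n(x^*)$, tangency of $D\pi(x^*)w$ to $\partial C$ forces $Dt(x^*)[w]=0$. Hence $D\pi(x^*)=I+d^*Dn(x^*)=I-d^*S_\Omega$ (with $Dn=-S_\Omega$), not its inverse; the factor $(1-d^*\kappa_i^\Omega)^{-1}$ belongs to $\pi^{-1}$. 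So the eigenvalues are the product $\mu_i=(1-d^*\kappa_i^C)(1-d^*\kappa_i^\Omega)$.

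The concentric spheres of Section~\ref{subsec:concentric} decide the matter. Your Step~1 never uses nondegeneracy, so they are a fair test. There $F=\mathrm{id}$, $d^*=R-1$, $\kappa^C=-1$ (as $D\nu=I$) and $\kappa^\Omega=1/R$. The product gives $R\cdot R^{-1}=1$, while the quotient gives $R^2$, or $R^2/(2R-1)$ with the other sign for $\kappa^\Omega$. Since $|\kappa^C|\neq|\kappa^\Omega|$, no sign convention rescues the quotient.

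So this is not a detail you may take as given. The quotient formula, which the paper's proof imports from the same formal computation, is false, and the trichotomy does not follow from it. With the product, $1-\mu_i=d^*(\kappa_i^C+\kappa_i^\Omega-d^*\kappa_i^C\kappa_i^\Omega)$. The contraction threshold is therefore $\kappa_i^\Omega=-\kappa_i^C/(1-d^*\kappa_i^C)$, the curvature of the parallel surface of $\partial C$ through $x^*$, and not $\kappa_i^\Omega=\kappa_i^C$. Equal curvatures give $\mu_i=(1-d^*\kappa_i^C)^2\neq1$ unless $\kappa_i^C=0$.

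Worse, at a critical point $S_\Omega$ is not independent data. Differentiating the unit normal of the surface parametrized by $\Phi$ gives $S_\Omega=\bigl[-S_C-(I-d^*S_C)^{-1}H^*\bigr](I-d^*S_C)^{-1}$, with $H^*=\operatorname{Hess}_{\partial C}(d)(c^*)$. Hence $DF(c^*)=I+d^*(I-d^*S_C)^{-1}H^*$. Under nonfocality this has all eigenvalues greater than $1$ at a nondegenerate minimum. In the flat case this is just $F(c)\approx c+d\,\nabla_{\partial C}d$: the inward normal on the flank of a bump tilts toward its crest.

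So under the standing hypothesis of Proposition~\ref{prop:ellipticity}, the conclusion $0<\mu_i<1$ of item~1 is never attained. The statement itself, not only your argument, would need to be revised.
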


\begin{proof}
The formula for \(\mu_{i}\) was obtained in the proof of Proposition \ref{prop:ellipticity}. Under the nonfocality assumption, the denominators are positive. If \(\kappa_{i}^{\Omega}<\kappa_{i}^{C}\), then
\[
1-d^{*}\kappa_{i}^{C}<1-d^{*}\kappa_{i}^{\Omega},
\]
so \(0<\mu_{i}<1\). If \(\kappa_{i}^{\Omega}=\kappa_{i}^{C}\), then \(\mu_{i}=1\). If \(\kappa_{i}^{\Omega}>\kappa_{i}^{C}\), then \(\mu_{i}>1\).
\end{proof}

\begin{remark}[Geometric phase transition]
\label{rem:phase_transition}
Proposition \ref{prop:curvature_gap} shows that the sign of the curvature gap
\[
\kappa_{i}^{C} - \kappa_{i}^{\Omega}
\]
controls the nature of the local dynamics in each principal direction. Thus the round-trip map exhibits a geometric transition between dissipative, neutral, and expanding regimes according to the relative curvatures of the two boundaries.
\end{remark}

\begin{remark}[Morse normal form and anisotropic dynamics]
\label{rem:morse_normal}
Since \(c^{*}\) is a nondegenerate critical point, the Morse lemma provides local coordinates \(x\in \mathbb{R}^{n}\) centered at \(c^{*}\) in which the thickness function takes the normal form
\[
d(x) = d(c^{*}) + \frac12\Bigl(-\sum_{i=1}^{k}x_{i}^{2} + \sum_{i=k+1}^{n}x_{i}^{2}\Bigr).
\]
In these coordinates, the gradient is linear:
\[
\nabla d(x) = J_{\lambda}x,\quad J_{\lambda} = \operatorname{diag}(-1,\dots,-1,1,\dots,1),
\]
up to a normalization factor depending on conventions.

The first-order expansion of the return map becomes
\[
F(x) = x - A_{0}J_{\lambda}x + o(\|x\|), \tag{7.13}\label{eq:morse_normal}
\]
where \(A_{0}\) denotes the matrix of \(A(c^{*})\) in Morse coordinates.

Thus, after normalization of the energy landscape by the Morse lemma, the local dynamics is entirely governed by the operator
\[
A_{0}J_{\lambda}.
\]
This decomposition has a clear geometric interpretation: the Morse lemma removes all nonlinearities of the function \(d\) at second order, reducing it to a canonical quadratic form, while the operator \(A(c^{*})\) retains the full anisotropic information associated with the return mechanism.

In particular, the Morse normal form isolates the intrinsic geometry of the thickness function from the extrinsic geometry of the round-trip defining the return map.
\end{remark}

\section{Examples}
\label{sec:examples}

We briefly illustrate the general results in low dimensions and then present a concrete explicit example.

\subsection{Two dimensional case}
\label{subsec:2d}

If \(N=2\) then \(\partial C\) is a closed one-dimensional manifold. In the case where \(\partial C\) is diffeomorphic to a circle \(S^{1}\), the Betti numbers satisfy
\[
b_{0}=b_{1}=1.
\]
By the Morse inequality \eqref{eq:morse_ineq}, any Morse function must have at least two critical points. Consequently, by \eqref{eq:fix_crit_id}, the return map admits at least two equilibria.

Moreover, the Euler characteristic satisfies \(\chi(S^{1})=0\), and thus \eqref{eq:euler_dyn} implies that the number of attracting and repelling equilibria must balance the number of saddle-type points.

\subsection{Three dimensional case}
\label{subsec:3d}

If \(N=3\) then \(\partial C\) is a compact surface. The topology of the surface determines the minimal number of equilibria of the return dynamics via \eqref{eq:lower_bound}. For instance, if \(\partial C\simeq S^{2}\), then
\[
b_{0}=b_{2}=1,\qquad b_{1}=0,
\]
and therefore at least two equilibria are required.

More generally, for surfaces of higher genus, the Betti numbers increase, and the Morse inequalities enforce a larger number of equilibria. The Euler characteristic identity \eqref{eq:euler_dyn} further constrains the distribution of these equilibria according to their Morse indices.

\subsection{A concrete example: concentric spheres in \(\mathbb{R}^{3}\)}
\label{subsec:concentric}

We now present a concrete explicit example where all quantities can be computed explicitly. Let \(\partial C\) be the unit sphere in \(\mathbb{R}^{3}\): \(\partial C = S^{2}\). Let \(\partial \Omega\) be a larger concentric sphere of radius \(R>1\): \(\partial \Omega = \{x\in\mathbb{R}^{3}: |x|=R\}\). Then \(C\) is the unit ball, \(\Omega\) is the open ball of radius \(R\), and the thickness function is constant:
\[
d(c) = R-1\quad \text{for all } c\in S^{2}.
\]
The gradient \(\nabla_{\partial C}d\) vanishes identically, so every point of \(S^{2}\) is a critical point; the Morse assumption fails because the function is constant. This is the Morse–Bott case with critical manifold \(S^{2}\) \cite{bott1954}.

To obtain a nondegenerate Morse function, we perturb the outer boundary slightly. Let \(\partial \Omega\) be given in spherical coordinates by
\[
\rho(\theta,\phi) = R + \varepsilon Y_{2}^{0}(\theta,\phi),
\]
where \(Y_{2}^{0}(\theta,\phi)\) is the spherical harmonic of degree \(2\) (e.g., proportional to \(3\cos^{2}\theta-1\)), and \(\varepsilon>0\) is small. The thickness function becomes
\[
d(\theta,\phi) = \rho(\theta,\phi)-1 = (R-1) + \varepsilon Y_{2}^{0}(\theta,\phi).
\]
For sufficiently small \(\varepsilon\), \(d\) is a Morse function on \(S^{2}\) with critical points at the poles and along the equator (depending on the sign of \(\varepsilon\)). For instance, if \(\varepsilon>0\), the north and south poles are local maxima, the equator is a local minimum (but actually a circle of minima, so still Morse–Bott). To obtain isolated critical points, one can use a sum of different spherical harmonics with nondegenerate critical points; the classical result is that a generic Morse function on \(S^{2}\) has at least two critical points.

The return map can be computed explicitly in this setting because the radial map is simply \(\Phi(c) = c + d(c)\nu(c) = c(1+d(c))\) (since \(\nu(c)=c\) for the sphere). The reciprocal map \(\pi\) from a point on \(\partial \Omega\) back to \(\partial C\) along the inward normal is also radial: for a point \(x = \rho u\) with \(u\in S^{2}\), the inward normal is \(-u\) and the ray \(x - t u\) hits \(\partial C\) when \(t = \rho - 1\). Hence \(\pi(x) = u\). Consequently,
\[
F(c) = \pi(\Phi(c)) = \frac{\Phi(c)}{|\Phi(c)|} = \frac{c(1+d(c))}{1+d(c)} = c.
\]
Thus \(F\) is the identity map! This is because the concentric spheres are parallel; the round-trip brings each point back to itself. In this case every point is a fixed point, and the linearization is the identity, consistent with the fact that the Hessian of \(d\) vanishes (since \(d\) is constant) and the operator \(A(c^{*})\) is not defined. This degenerate situation illustrates why the Morse condition is essential for obtaining a nontrivial anisotropic structure.

To obtain a nontrivial dynamics, one must consider non-concentric boundaries or more general geometries where the normals are not aligned. A simple nontrivial example in the plane (\(\mathbb{R}^{2}\)) is given by taking \(\partial C\) to be the unit circle and \(\partial \Omega\) to be an ellipse. In that case, the thickness function varies, and the return map can be studied numerically. The theoretical framework developed in this paper applies to such configurations, and the curvature-gap criterion (Proposition \ref{prop:curvature_gap}) can be used to determine the local stability of the equilibria.

\section{Structural properties and extensions}
\label{sec:structural}

In this section we describe global structural properties of the return dynamics and discuss extensions beyond the Morse setting. While Section \ref{sec:local} provides a quantitative local analysis near equilibria, the results below capture global qualitative features of the dynamics.

\subsection{Gradient-like structure}
\label{subsec:gradient_like}

The return dynamics admits a natural Lyapunov function given by the thickness function.

\begin{proposition}[Gradient-like structure]
\label{prop:gradient_like}
The return map \(F\) defines a gradient-like dynamical system on \(\partial C\) with Lyapunov function \(d\), in the sense that
\[
d(F(c))\leq d(c), \tag{9.1}\label{eq:lyap_mon}
\]
with equality if and only if \(c\) is a critical point of \(d\), i.e.,
\[
d(F(c)) = d(c) \iff \nabla_{\partial C}d(c)=0. \tag{9.2}\label{eq:lyap_eq}
\]
\end{proposition}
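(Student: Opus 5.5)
The plan is to split the statement into the monotonicity inequality \eqref{eq:lyap_mon} and the equality characterization \eqref{eq:lyap_eq}. Both will be derived from the basic properties recalled in Section~\ref{sec:geometric} together with the fixed-point characterization \eqref{eq:fix_crit_id}.

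\textbf{Step 1: monotonicity.} The inequality \eqref{eq:lyap_mon} is exactly the descent property \eqref{eq:descent} from \cite{barkatou_elmorsalani_convergence}, so nothing new needs to be shown.

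\textbf{Step 2: equality at critical points.} For the implication ``\(\Leftarrow\)'' in \eqref{eq:lyap_eq}, suppose \(\nabla_{\partial C}d(c)=0\). By \eqref{eq:fix_crit_id}, \(c\) is a fixed point of \(F\), so \(F(c)=c\). Hence \(d(F(c))=d(c)\) trivially.

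\textbf{Step 3: strict decrease away from critical points.} For ``\(\Rightarrow\)'', I would argue by contraposition: if \(\nabla_{\partial C}d(c)\neq 0\), I want to show \(d(F(c))<d(c)\). The route I would try first goes back to the geometry of the round trip. Put \(x=\Phi(c)\) and \(F(c)=\pi(x)=x+t\,n(x)\) with \(t\ge 0\). By construction the inward normal segment from \(x\) to \(F(c)\) lies in \(\overline{\Omega}\). The key comparison is then
\[
d(F(c))\le |x-F(c)|\le d(c).
\]
The first inequality holds because \(x\) lies outside the ray from \(F(c)\) unless the two normals coincide. The second holds because \(x\) is the nearest-type projection.

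The aim is to show that equality in this chain forces \(n(x)=-\nu(c)\), i.e.\ that the inward normal of \(\partial\Omega\) at \(\Phi(c)\) is antiparallel to \(\nu(c)\). Differentiating via Lemma~\ref{lem:grad_phi}, the tangent space of \(\partial\Omega\) at \(x\) is spanned by the vectors \((I-dS_C)v+\langle\nabla_{\partial C}d,v\rangle\nu\). So \(\nu(c)\) is normal to \(\partial\Omega\) at \(x\) iff \(\nabla_{\partial C}d(c)=0\), using nonfocality so that \(I-dS_C\) is invertible. In that case \(F(c)=c\), and \eqref{eq:fix_crit_id} gives the claim.

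\textbf{Main obstacle.} The hardest part is making the strictness in Step~3 rigorous, since \eqref{eq:descent} by itself only gives a non-strict inequality. The geometric comparison would need the convexity of \(C\) (via the outward normal at \(F(c)\)) to make the first inequality strict when \(F(c)\neq c\). Alternatively, one can invoke the strict-decrease statement underlying the global convergence result of \cite{barkatou_elmorsalani_convergence}. There the Lyapunov function \(V=\tfrac12 d^2\) is shown to strictly decrease off \(\operatorname{Fix}(F)\), and since \(d>0\) this is equivalent to strict decrease of \(d\). If the geometric argument proves delicate, I would rely on that strict decrease together with \eqref{eq:fix_crit_id}.
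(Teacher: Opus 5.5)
Your Steps 1 and 2 are exactly the paper's proof, which quotes \eqref{eq:descent} for the inequality and uses \eqref{eq:fixed_grad} for the equality case. You are also right that this only yields ``critical $\Rightarrow$ fixed $\Rightarrow$ equality''. The identity $d(F(c))=d(c)$ does not force $F(c)=c$, and nothing recalled in Section~\ref{sec:geometric} excludes a non-fixed point at which $d$ is preserved for one step. So the paper's one-line appeal to \eqref{eq:fixed_grad} does not prove ``$\Rightarrow$''.

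Your Step~3 does not close this gap, because the chain $d(F(c))\le|x-F(c)|\le d(c)$ is wrong. Since $C$ is convex and $x-c=d(c)\nu(c)$ lies in the normal cone of $C$ at $c$, it is $c$, not $F(c)$, that is the metric projection of $x$ onto $C$. Hence $|x-F(c)|\ge\operatorname{dist}(x,C)=d(c)$, with equality only if $F(c)=c$. The first inequality has no justification either, since $d(F(c))$ is measured along $\nu(F(c))$. The natural comparison is $\operatorname{dist}(F(c),\partial\Omega)\le\min\{d(F(c)),|x-F(c)|\}$. Whenever the ball of radius $|x-F(c)|$ about $F(c)$ lies in $\Omega$, this gives $d(c)\le|x-F(c)|\le d(F(c))$, the exact reverse of your chain. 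So your equality analysis never gets started. Your tangent-space computation is correct, but it only re-derives \eqref{eq:fixed_grad}.

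A flat local model shows that no argument of this geometric kind can work. Take $\partial C=\{y=0\}$ and $\partial\Omega=\{y=d(s)\}$. The inward normal at $(s,d(s))$ is proportional to $(d'(s),-1)$, so the return point is $F(s)=s+d(s)\,d'(s)$. Then $d(F(s))-d(s)=d(s)\,d'(s)^{2}\bigl(1+\tfrac12 d(s)\,d''(\xi)\bigr)$ for some $\xi$ between $s$ and $F(s)$. This is positive off critical points when $d\,|d''|<2$. In other words, the round trip read off directly moves toward larger thickness. This is in tension with \eqref{eq:descent} itself, and it shows that neither the inequality nor its strictness can be rebuilt from the round-trip geometry the way you attempt.

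Your fallback does not help either. A strict decrease of $V=\tfrac12 d^{2}$ off $\operatorname{Fix}(F)$, taken from \cite{barkatou_elmorsalani_convergence}, is not among the properties recorded in Section~\ref{sec:geometric}, and it is essentially the statement to be proved. As it stands, the ``$\Rightarrow$'' direction of \eqref{eq:lyap_eq} is unproved in your proposal, exactly as in the paper's own proof.
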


\begin{proof}
The monotonicity property \eqref{eq:lyap_mon} follows from the descent property \eqref{eq:descent}. The characterization of equality is a direct consequence of \eqref{eq:fixed_grad}.
\end{proof}

\begin{remark}[Relation to local quantitative descent]
\label{rem:local_vs_global}
The inequality \eqref{eq:lyap_mon} provides a global Lyapunov structure for the return dynamics. In contrast, the results of Section \ref{sec:local} show that near nondegenerate equilibria this descent is quantitatively strict and governed by the operator \(A(c^{*})\), leading to local contraction under suitable spectral conditions.
\end{remark}

\begin{remark}
This shows that the return map belongs to the class of gradient-like systems in the sense of \cite{smale1961,hirsch2004,shub1987}.
\end{remark}

\subsection{Absence of periodic orbits}
\label{subsec:no_periodic}

As a consequence of the gradient-like structure, the dynamics does not admit nontrivial cycles.

\begin{corollary}[Absence of nontrivial periodic orbits]
\label{cor:no_periodic}
The return map \(F\) admits no periodic orbits other than fixed points.
\end{corollary}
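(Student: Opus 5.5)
The plan is to prove the statement by a standard Lyapunov argument, using Proposition \ref{prop:gradient_like} as the only input. Suppose \(c\in\partial C\) is periodic with minimal period \(p\geq 2\), i.e. \(F^{p}(c)=c\) and \(F^{k}(c)\neq c\) for \(0<k<p\). Set \(c_{k}=F^{k}(c)\) for \(k=0,\dots,p\), so that \(c_{p}=c_{0}\).

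First, apply the monotonicity \eqref{eq:lyap_mon} along the orbit. This gives the chain
\[
d(c_{0})\geq d(c_{1})\geq \cdots \geq d(c_{p}) = d(c_{0}).
\]
All inequalities in the chain must therefore be equalities. In particular \(d(F(c_{0}))=d(c_{0})\).

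Second, invoke the equality case \eqref{eq:lyap_eq}. It yields \(\nabla_{\partial C}d(c_{0})=0\), so \(c_{0}\in\operatorname{Crit}(d)\). By the identification \eqref{eq:fix_crit_id}, \(c_{0}\in\operatorname{Fix}(F)\), that is \(F(c_{0})=c_{0}\). This contradicts minimality of \(p\geq 2\). Hence every periodic point is a fixed point. If one prefers, the same conclusion holds pointwise along the orbit, since every \(c_{k}\) is critical and hence fixed, so the orbit collapses to a single point.

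The main point requiring care is the equality clause of \eqref{eq:lyap_eq}. The proof of Proposition \ref{prop:gradient_like} derives it only from \eqref{eq:fixed_grad}, which characterizes fixed points and not the case \(d(F(c))=d(c)\). I would take Proposition \ref{prop:gradient_like} as stated, as the paper allows. If a more robust argument is wanted, I would instead use strict descent at non-critical points: a point with \(d(F(c))=d(c)\) and \(F(c)\neq c\) would contradict the strict decrease asserted in \cite{barkatou_elmorsalani_convergence}.

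A second, independent route uses global convergence (Section \ref{subsec:properties}). The sequence \(F^{n}(c)\) converges to a critical point, but a \(p\)-periodic sequence converges only if it is constant. So \(c\) is fixed, and this route avoids the equality clause entirely. I would present this as the primary argument, with the Lyapunov chain as the geometric interpretation.
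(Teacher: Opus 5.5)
Your proposal is correct. Your Lyapunov-chain argument is exactly the paper's proof: monotonicity \eqref{eq:lyap_mon} makes \(d\) constant along the orbit, and the equality clause \eqref{eq:lyap_eq} makes every orbit point critical, hence fixed.

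The route you want to make primary is genuinely different. It uses only the global convergence property of Section~\ref{subsec:properties}, together with the elementary fact that a convergent \(p\)-periodic sequence is constant: each subsequence \(c_{k+mp}=c_{k}\) is constant and must equal the limit.

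Your diagnosis of the paper's route is accurate. The proof of Proposition~\ref{prop:gradient_like} derives \eqref{eq:lyap_eq} only from \eqref{eq:fixed_grad}. That yields \(\nabla_{\partial C}d(c)=0 \Rightarrow F(c)=c \Rightarrow d(F(c))=d(c)\), but not the converse \(d(F(c))=d(c)\Rightarrow \nabla_{\partial C}d(c)=0\), which is the direction the corollary actually uses. The converse needs strict descent, \(d(F(c))<d(c)\) whenever \(F(c)\neq c\). That property is not among those listed in Section~\ref{subsec:properties}, so your fallback via strict descent also rests on an input the paper never states.

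Your convergence argument sidesteps this. It needs neither the Lyapunov function nor the fact that the limit is critical. Its cost is that it leans on a much stronger black box, global convergence of every orbit. That result is usually itself derived from strict descent plus extra structure, and it can fail for gradient-like systems with degenerate, non-isolated critical sets, where orbits may accumulate on a continuum of critical points without converging. The Lyapunov argument, once strict descent off \(\operatorname{Fix}(F)\) is available, excludes nontrivial periodic orbits without any convergence or nondegeneracy hypothesis.

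Within this paper both inputs are imported from the companion work. Of the two, yours is the argument whose hypotheses are actually stated on the page.
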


\begin{proof}
Suppose that \((c_{k})\) is a periodic orbit. Then by repeated application of \eqref{eq:lyap_mon}, the sequence \(d(c_{k})\) must be constant. By \eqref{eq:lyap_eq}, this implies that each point of the orbit is a critical point of \(d\), and hence a fixed point of \(F\).
\end{proof}

\subsection{Beyond the Morse case}
\label{subsec:morse_bott}

The assumption that the thickness function is Morse can be relaxed.

\begin{definition}
A smooth function \(d:\partial C\to \mathbb{R}_{+}\) is called a \emph{Morse–Bott function} if its critical set is a finite union of smooth submanifolds and the Hessian of \(d\) is nondegenerate in the directions normal to these submanifolds \cite{nicolaescu2011,bott1954}.
\end{definition}

In this setting, the set of equilibria of the return map is no longer discrete but may contain smooth manifolds of fixed points:
\[
\operatorname{Fix}(F) = \operatorname{Crit}(d), \tag{9.3}\label{eq:fix_bott}
\]
where \(\operatorname{Crit}(d)\) is now a union of submanifolds.

The local dynamics near such a critical manifold splits into transverse and tangential components. In the transverse directions, the behavior is governed by the nondegenerate part of the Hessian, while in the tangential directions the dynamics is neutral to first order:
\[
DF(c^{*}) = I - A(c^{*})\operatorname{Hess}_{\partial C}(d)(c^{*}), \tag{9.4}\label{eq:lin_bott}
\]
with zero eigenvalues corresponding to tangent directions of the critical manifold.

\begin{remark}[Transverse anisotropic dynamics]
\label{rem:bott_anisotropic}
Even in the Morse–Bott setting, the transverse dynamics near a critical manifold retains the anisotropic structure identified in Section \ref{sec:local}. In particular, the operator \(A(c^{*})\) continues to act as a geometric preconditioner in the directions normal to the critical manifold, while tangential directions correspond to neutral modes at first order.
\end{remark}

\begin{remark}
This extension shows that the qualitative picture developed in the Morse case remains valid in a broader setting, where isolated equilibria are replaced by manifolds of equilibria and stability is understood in the directions transverse to these manifolds.
\end{remark}

\section{Conclusion}
\label{sec:conclusion}

In this paper we have analyzed the geometric, dynamical, and topological structure of the thickness landscape associated with domains in the class \(\mathcal{O}_C\), and its relation to the return map induced by the round-trip construction between the convex core and the outer boundary.

Building on the return map framework introduced in \cite{barkatou_elmorsalani_return} and the convergence results established in the companion work on global return dynamics \cite{barkatou_elmorsalani_convergence}, we have shown that the equilibria of the return map are exactly the critical points of the thickness function. This identification provides the basic bridge between the dynamics on \(\partial C\) and the geometry of the thickness landscape.

A first consequence is that Morse-theoretic tools become directly applicable to the study of the return dynamics. Under the Morse assumption, the topology of \(\partial C\) imposes global constraints on the equilibria: the Morse inequalities provide lower bounds on the number of fixed points, while the Euler characteristic identity yields a global balance relation between equilibria of different Morse indices. In this way, the qualitative structure of the dynamics is shown to be constrained not only by the geometry of the domain, but also by the topology of the boundary of its convex core.

A second and more refined contribution of the present work is the local analysis of the dynamics near equilibrium points. We proved that the return map admits, near a nondegenerate critical point, a first-order expansion of the form
\[
F(c) = c - A(c^{*})\nabla_{\partial C}d(c) + R(c),
\]
with a linear operator \(A(c^{*})\) that encodes the anisotropic geometry of the round-trip. Under natural ellipticity assumptions, this leads to a quantitative descent estimate and, under a spectral gap condition, to local linear convergence. When the boundaries are aligned and the round-trip is nonlocal, the eigenvalues of the linearization are expressed in terms of the principal curvatures, revealing a curvature-gap criterion for contraction, neutrality, or expansion.

We have also indicated that the Morse assumption can be relaxed to a Morse–Bott framework, in which the set of equilibria may consist of smooth manifolds. This suggests that the geometric mechanism underlying the return map is robust and extends beyond the nondegenerate case.

\paragraph{Perspectives.}
The results of this paper are part of a broader research program aimed at understanding the interplay between geometry, dynamics, and topology in domains of class \(\mathcal{O}_C\).

A first direction is the development of a refined analysis of the thickness landscape beyond the Morse setting, including the study of degenerate critical points and bifurcation phenomena. A second direction is the investigation of the global structure of the stable and unstable manifolds associated with the return dynamics, and their role in organizing the geometry of the domain. A third direction concerns the interpretation of the return dynamics as a discrete geometric flow, and its relation to optimization methods on manifolds \cite{absil2008}. In this perspective, the return map may be viewed as a geometrically induced descent algorithm, suggesting potential applications to inverse geometric problems. More broadly, the framework developed here suggests that the geometry of non-Lipschitz domains gives rise to natural dynamical systems whose structure is governed by intrinsic geometric quantities. Understanding this interaction remains a rich source of problems at the interface of analysis, geometry, and dynamical systems.

\end{document}